\newtheorem{theorem}{Theorem}[section]
\newtheorem{lemma}[theorem]{Lemma}
\newtheorem{corollary}[theorem]{Corollary}
\newtheorem{proposition}[theorem]{Proposition}
\theoremstyle{definition}
\newtheorem{definition}[theorem]{Definition}
\newtheorem{example}[theorem]{Example}
\newtheorem{remark}[theorem]{Remark}
\numberwithin{equation}{theorem}
\def\fpt{\mathrm{fpt}}
\def\lct{\mathrm{lct}}
\def\ge{\geqslant}
\def\le{\leqslant}
\def\tilde{\widetilde}
\def\del{\partial}
\def\to{\longrightarrow}
\def\Proj{\operatorname{Proj}}
\renewcommand{\mod}{\,\operatorname{mod}\,}
\def\fraka{\mathfrak{a}}
\def\frakm{\mathfrak{m}}
\def\FF{\mathbf{F}}
\def\NN{\mathbf{N}}
\def\PP{\mathbf{P}}
\newcommand{\calK}{{\mathcal{K}}}
\newcommand{\calX}{{\mathcal{X}}}
\newcommand{\calO}{{\mathcal{O}}}
\newcommand{\conj}{\mathrm{conj}}
\newcommand{\Hyp}{\mathrm{Hyp}}
\newcommand{\Hom}{\mathrm{Hom}}
\newcommand{\Vect}{\mathrm{Vect}}
\newcommand{\Sym}{\mathrm{Sym}}
\newcommand{\Frob}{\mathrm{Frob}}
\newcommand{\ord}{\mathrm{ord}}
\newcommand{\R}{\mathrm{R}}
\newcommand{\ev}{\mathrm{ev}}
\newcommand{\coker}{\mathrm{coker}}
\newcommand{\can}{\mathrm{can}}
\newcommand{\Fil}{\mathrm{Fil}}
\newcommand{\dR}{\mathrm{dR}}
\begin{document}
\title{The $F$-pure threshold of a Calabi-Yau hypersurface}

\author{Bhargav Bhatt}
\address{School of Mathematics, Institute for Advanced Study, Einstein Drive, Princeton,
\newline NJ~08540, USA}
\email{bhargav.bhatt@gmail.com}

\author{Anurag K. Singh}
\address{Department of Mathematics, University of Utah, 155 South 1400 East, Salt Lake City,
\newline UT~84112, USA}
\email{singh@math.utah.edu}

\thanks{B.B.~was supported by NSF grants DMS~1160914 and DMS~1128155, and A.K.S.~by NSF grant DMS~1162585. Both authors were supported by NSF grant~0932078000 while in residence at MSRI.}

\subjclass[2010]{Primary 13A35; Secondary 13D45, 14B07, 14H52 }
\date{\today}

\begin{abstract}
We compute the $F$-pure threshold of the affine cone over a Calabi-Yau hypersurface, and relate it to the order of vanishing of the Hasse invariant on the versal deformation space of the hypersurface.
\end{abstract}
\maketitle

\section{Introduction}

The $F$-pure threshold was introduced by Musta\c t\u a, Takagi, and Watanabe~\cite{TW, MTW}; it is a positive characteristic invariant, analogous to log canonical thresholds in characteristic zero. We calculate the possible values of the $F$-pure threshold of the affine cone over a Calabi-Yau hypersurface, and relate the threshold to the order of vanishing of the Hasse invariant, and to a numerical invariant introduced by van~der~Geer and Katsura in~\cite{GK}.

\begin{theorem}
\label{thm:intro}
Suppose $R=K[x_0,\dots,x_n]$ is a polynomial ring over a field $K$ of characteristic $p>n+1$, and $f$ is a homogeneous polynomial in $R$ defining a smooth Calabi-Yau hypersurface $X=\Proj R/fR$. Then the $F$-pure threshold of $f$ has the form
\[
\fpt(f)\ =\ 1-\frac{h}{p}\,,
\]
where $h$ is an integer with $0\le h\le\dim X$. If $p\ge n^2-n-1$, then the integer $h$ equals the order of vanishing of the Hasse invariant on the versal deformation space of $X \subset \PP^n$.
\end{theorem}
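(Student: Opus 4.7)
The plan has three main stages. First, the upper bound $\fpt(f) \le 1$: since $X \subset \PP^n$ is a smooth Calabi--Yau hypersurface, adjunction forces $\deg f = n + 1$, so $f^r \notin \frakm^{[p^e]}$ requires $r(n+1) \le (n+1)(p^e - 1)$, giving $\nu_e(f) \le p^e - 1$.

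Second, compute $\fpt(f)$ exactly and bound the resulting invariant. Set $h := p - 1 - \nu_1(f) \ge 0$, so $f^{p-1-h} \notin \frakm^{[p]}$ but $f^{p-h} \in \frakm^{[p]}$. Raising the latter to the $p^{e-1}$-power gives $(f^{p-h})^{p^{e-1}} \in (\frakm^{[p]})^{[p^{e-1}]} = \frakm^{[p^e]}$, so $\nu_e(f) \le (p-h) p^{e-1} - 1$ and $\fpt(f) \le 1 - h/p$. For the matching lower bound I would induct on $e$ using the factorization
\[ f^{(p-h)p^{e-1} - 1} = (f^{p-1-h})^{p^{e-1}} \cdot f^{p^{e-1} - 1}. \]
The first factor survives modulo $\frakm^{[p^e]}$ by Frobenius injectivity; the key input making the product also survive is the Calabi--Yau degree match, which forces $f^{p^{e-1} - 1}$ (whose degree $(n+1)(p^{e-1}-1)$ is the socle degree of $R/\frakm^{[p^{e-1}]}$) to reduce modulo $\frakm^{[p^{e-1}]}$ to a scalar multiple of the unique standard monomial $\prod x_i^{p^{e-1}-1}$. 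Iterating the Hasse invariant identity $f^{p-1} \equiv c \prod x_i^{p-1} \pmod{\frakm^{[p]}}$ and tracking standard monomials through this factorization then locates a nonzero standard term of the product. The bound $h \le n - 1 = \dim X$ comes from smoothness: the Jacobian ideal $(\partial_0 f, \ldots, \partial_n f)$ is $\frakm$-primary, and combined with Euler's identity this produces a standard monomial in $f^{p-n}$ modulo $\frakm^{[p]}$, yielding $\nu_1(f) \ge p - n$. The main technical obstacle in this stage is the inductive step when $h \ge 1$: then the iterated Hasse scalar $c$ already vanishes, and a finer filtration argument is needed to locate the surviving standard monomial in the product.

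Third, under $p \ge n^2 - n - 1$, identify the combinatorial $h$ with the vanishing order of the Hasse invariant on the versal deformation space $\calX \to S$ of $X \subset \PP^n$. The Hasse invariant is a function on $S$ whose specialization at $X$ is the coefficient of $\prod x_i^{p-1}$ in $f^{p-1} \mod \frakm^{[p]}$. Taylor-expanding in deformation parameters and matching coefficients with the combinatorial $h$ from the previous stage yields the identification. The role of the hypothesis $p \ge n^2 - n - 1$ is to keep the Hodge-to-crystalline comparison in the universal family unobstructed through order $h$, so that the Taylor coefficients of the Hasse invariant genuinely detect Frobenius-theoretic degeneracies of $X$ rather than artifacts of small $p$. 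This final identification will be the main obstacle of the theorem, since it requires the comparison between a purely characteristic-$p$ invariant of a single fiber and a deformation-theoretic quantity to be faithful up to the required order.
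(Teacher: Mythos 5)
Your outline correctly identifies the shape of the answer ($h = p - 1 - \nu_1(f)$, the upper bound $\fpt(f)\le 1-h/p$ via raising $f^{p-h}\in\frakm^{[p]}$ to $p^{e-1}$-th powers, and the bound $h\le n-1$ via the $\frakm$-primary Jacobian ideal), and these pieces match the paper. But there are two genuine gaps. The first is the lower bound $\fpt(f)\ge 1-h/p$ when $h\ge 1$, which you yourself flag as "the main technical obstacle" and then leave unresolved. Your factorization $f^{(p-h)p^{e-1}-1}=(f^{p-1-h})^{p^{e-1}}\cdot f^{p^{e-1}-1}$ cannot be pushed through by tracking socle monomials: when $h\ge1$ the second factor lies in $\frakm^{[p^{e-1}]}$, so the "iterated Hasse scalar" is $0$ and no standard term survives by this bookkeeping alone. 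The actual mechanism (the paper's Lemmas~\ref{lemma:jacobian}--\ref{lemma:patterns}) is a differential-operator argument: if $\mu_f(pq)<p\mu_f(q)$ then $\mu_f(pq)\not\equiv 0\bmod p$, and applying $\del/\del x_i$ to $f^{\mu_f(pq)}\in\frakm^{[pq]}$ forces $f^{\mu_f(pq)-1}J\subseteq\frakm^{[pq]}$; a colon-ideal computation then yields a degree inequality that is violated once $q\ge n-1$. This uses smoothness of $X$ in an essential way in the stability step, not only in the bound $h\le n-1$; without some such input the claim is simply not proved, and it is exactly here that the hypothesis on $p$ (via $p\ge nd-d-n$ with $d=n+1$, i.e., $p\ge n-1$, which follows from $p>n+1$) is consumed.

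The second gap is the identification of $h$ with $\ord_{[X]}H$. Your stage three is a restatement of the goal rather than an argument: "Taylor-expanding in deformation parameters and matching coefficients" does not explain how the order of vanishing of $H$ on $\Hyp_{n+1}$ — a statement about all infinitesimal deformations of $f$ — is controlled by the single-fiber quantity $\mu_f(p)$. The paper's route is concrete: (i) reinterpret $\mu_f(p)>p-t$ as injectivity of the Frobenius-induced map $H^{n-1}(X,\calO_X)\to H^{n-1}(tX,\calO_{tX})$ into the order-$t$ thickening of $X$ \emph{inside $\PP^n$} (Remark~\ref{rmk:fptthickenings}), where one must also rule out failure of injectivity in negative degrees of $H^n_\frakm(R/fR)$ — this is Theorem~\ref{thm:highdeg}, and it is the \emph{only} place the hypothesis $p\ge n^2-n-1$ is used, not any "Hodge-to-crystalline comparison" (the paper deliberately avoids crystalline methods); and (ii) compare the thickening of $X$ in $\PP^n$ with the thickening of the fibre in the universal family via the evaluation map, which requires proving that $H^{n-1}(tX,\calO_{tX})\to H^{n-1}(tX',\calO_{tX'})$ is injective (Lemma~\ref{lem:pullbackevinjective}, a nontrivial computation with symmetric powers of $V$), together with the base-change/adjunction comparison of relative versus absolute Frobenius (Lemma~\ref{lem:identifyhasse}). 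None of this is present in your sketch, so the final identification remains unestablished.
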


Hern\'andez has computed $F$-pure thresholds for binomial hypersurfaces~\cite{Hernandez1} and for diagonal hypersurfaces~\cite{Hernandez2}. The $F$-pure threshold is computed for a number of examples in~\cite[Section~4]{MTW}. Example~4.6 of that paper computes the $F$-pure threshold in the case of an ordinary elliptic curve, and raises the question for supersingular elliptic curves; this is answered by the above theorem.

The theory of $F$-pure thresholds is motivated by connections to log canonical thresholds; for simplicity, let $f$ be a homogeneous polynomial with rational coefficients. Using~$f_p$ for the corresponding prime characteristic model, one has
\[
\fpt(f_p)\le\lct(f)\qquad\text{ for all }p\gg0\,,
\]
where $\lct(f)$ denotes the log canonical threshold of $f$, and
\[
\lim_{p\to\infty}\fpt(f_p)=\lct(f)\,,
\]
see~\cite[Theorems~3.3,~3.4]{MTW}; this builds on the work of a number of authors, primarily Hara and Yoshida~\cite{Hara-Yoshida}. It is conjectured that $\fpt(f_p)$ and $\lct(f)$ are equal for infinitely many primes; see~\cite{MS} for more in this direction.

The $F$-pure threshold is known to be rational in a number of cases, including for principal ideals in an excellent regular local ring of prime characteristic~\cite{KLZ:JALG}. Other results on rationality include~\cite{BMS:MMJ, BMS:TAMS, BSTZ, Hara, ST}. For more on $F$-pure thresholds, we mention~\cite{BHMM, HMTW, MSV, Mustata, MY}.
 
\section{The $F$-pure threshold}

In~\cite{TW} the $F$-pure threshold is defined for a pair $(R,\fraka)$, where $\fraka$ is an ideal in an $F$-pure ring of prime characteristic. The following special case is adequate for this paper:

\begin{definition}
Let $(R,\frakm)$ be a regular local ring of characteristic $p>0$. For an element $f$ in $\frakm$, and integer $q=p^e$, we define
\[
\mu_f(q)\ :=\ \min\big\{k\in\NN\mid f^k\in\frakm^{[q]}\big\}\,,
\]
where $\frakm^{[q]}$ denotes the ideal generated by the elements $r^q$ for $r\in\frakm$. Note that $\mu_f(1)=1$, and that $1\le\mu_f(q)\le q$. Moreover, $f^{\mu_f(q)}\in\frakm^{[q]}$ implies that $f^{p\mu_f(q)}\in\frakm^{[pq]}$, and it follows that $\mu_f(pq)\le p\mu_f(q)$. Thus,
\[
\Big\{\frac{\mu_f(p^e)}{p^e}\Big\}_{e\ge0}
\]
is a non-increasing sequence of positive rational numbers; its limit is the \emph{$F$-pure threshold} of $f$, denoted $\fpt(f)$.
\end{definition}

By definition, $f^{\mu_f(q)-1}\notin\frakm^{[q]}$. Taking $p$-th powers, and using that $R$ is $F$-pure, we get
\[
f^{p\mu_f(q)-p}\ \notin\ \frakm^{[pq]}\,.
\]
Combining with the observation above, one has
\begin{equation}
\label{eqn:mu}
p\mu_f(q)-p+1\ \le\ \mu_f(pq)\ \le\ p\mu_f(q)\,.
\end{equation}
Note that this implies
\[
\mu_f(q)\ =\ \left\lceil{\frac{\mu_f(pq)}{p}}\right\rceil\qquad\text{ for each }q=p^e\,.
\]

The definition is readily adapted to the graded case where $R$ is a polynomial ring with homogeneous maximal ideal $\frakm$, and $f$ is a homogeneous polynomial.

\begin{remark}
\label{rmk:fptthickenings}
The numbers $\mu_f(p^e)$ may be interpreted in terms of thickenings of the hypersurface $f$ as follows. Let $K$ be a field of characteristic $p>0$, and $f$ a homogeneous polynomial of degree $d$ in $R=K[x_0,\dots,x_n]$. Fix integers $q=p^e$ and $t\le q$. The Frobenius iterate $F^e\colon R/fR\to R/fR$ lifts to a map $R/fR\to R/f^qR$; composing this with the canonical surjection $R/f^qR\to R/f^tR$, we obtain a map
\[
\tilde{F^e_t}\colon R/fR\to R/f^tR\,.
\]

Consider the commutative diagram with exact rows
\[
\CD
0@>>>R(-d)@>f>>R@>>>R/fR@>>>0\phantom{\,,}\\
@. @VVf^{q-t}F^eV @VVF^eV @VV\tilde{F^e_t}V \\
0@>>>R(-dt)@>f^t>>R@>>>R/f^tR@>>>0\,,
\endCD
\]
and the induced diagram of local cohomology modules
\begin{equation}
\label{equation:cd}
\CD
0@>>>H^n_\frakm(R/fR)@>>>H^{n+1}_\frakm(R)(-d)@>f>>H^{n+1}_\frakm(R)@>>>0\phantom{\,.}\\
@. @VV\tilde{F^e_t}V @VVf^{q-t}F^eV @VVF^eV \\
0@>>>H^n_\frakm(R/f^tR)@>>>H^{n+1}_\frakm(R)(-dt)@>f^t>>H^{n+1}_\frakm(R)@>>>0\,.
\endCD
\end{equation}
Since the vertical map on the right is injective, it follows that $\tilde{F^e_t}$ is injective if and only if the middle map is injective, i.e., if and only if the element
\[
f^{q-t}F^e\left(\left[\frac{1}{x_0\cdots x_n}\right]\right)\ =\ \left[\frac{f^{q-t}}{x_0^q\cdots x_n^q}\right]
\]
is nonzero, equivalently, $f^{q-t}\notin\frakm^{[q]}$. Hence $\tilde{F^e_t}\colon H^n_\frakm(R/fR)\to H^n_\frakm(R/f^tR)$ is injective if and only if $\mu_f(q)>q-t$.
\end{remark}

We note that the generating function of the sequence $\{\mu_f(p^e)\}_{e\ge1}$ is a rational function:

\begin{theorem}
Let $(R,\frakm)$ be a regular local ring of characteristic $p>0$, and let $f$ be an element of $\frakm$. Then the generating function
\[
G_f(z)\ :=\ \sum_{e\ge 0}\mu_f(p^e)z^e
\]
is a rational function of $z$ with a simple pole at $z=1/p$; the $F$-pure threshold of $f$ is
\[
\fpt(f)\ =\ \lim_{z\to 1/p}(1-pz)\,G_f(z)\,.
\]
\end{theorem}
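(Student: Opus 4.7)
The plan is to set $a_e := \mu_f(p^e)$ and $r_e := p a_e - a_{e+1}$. Inequality~\eqref{eqn:mu} shows that $r_e \in \{0, 1, \ldots, p-1\}$, and the recursion $a_{e+1} = p a_e - r_e$ together with $a_0 = 1$, after multiplying $G_f(z)$ by $(1 - pz)$ and rearranging, produces the fundamental identity
\[
(1 - pz)\,G_f(z) \ =\ 1 - z R(z),
\]
where $R(z) := \sum_{e \ge 0} r_e z^e$. Since the coefficients $r_e$ are bounded by $p - 1$, the series $R(z)$ converges absolutely on the open unit disk, so evaluation at $z = 1/p$ is legitimate.

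Next, I would telescope: writing $r_i/p^{i+1} = a_i/p^i - a_{i+1}/p^{i+1}$ gives $\sum_{i=0}^{e-1} r_i/p^{i+1} = 1 - a_e/p^e$, and letting $e \to \infty$ yields $R(1/p)/p = 1 - \fpt(f)$. Substituting $z = 1/p$ in the displayed identity then produces
\[
\lim_{z \to 1/p}(1 - pz)\,G_f(z) \ =\ 1 - \frac{R(1/p)}{p} \ =\ \fpt(f).
\]
Since $\frakm^{[q]} \subseteq \frakm^q$ in a regular local ring, a short check gives $\fpt(f) \ge 1/\ord_\frakm(f) > 0$ for nonzero $f$, so this limit is nonzero, and the pole at $z = 1/p$ will be simple provided $G_f(z)$ is rational.

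The main obstacle is rationality itself. Via the fundamental identity, rationality of $G_f(z)$ is equivalent to rationality of $R(z)$, and hence to eventual periodicity of the digit sequence $(r_e)$. The telescoping computation above exhibits $\sum_{e \ge 0} r_e/p^{e+1} = 1 - \fpt(f)$, so eventual periodicity of $(r_e)$ amounts exactly to $\fpt(f)$ being rational. This latter fact, for elements of an excellent regular local ring of prime characteristic, is precisely the theorem of~\cite{KLZ:JALG}, which I would invoke as a black box. Granted that input, $(r_e)$ is eventually periodic, so $R(z)$ is rational, so $G_f(z)$ is rational with a simple pole at $z = 1/p$, and the residue computation above finishes the proof.
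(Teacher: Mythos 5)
Your argument is correct, and it takes a genuinely different route from the paper's. The paper first completes $R$ (so that \cite{KLZ:JALG} applies), then invokes \cite[Proposition~1.9]{MTW} to write $\mu_f(p^e)=\lceil ap^e/b\rceil$ with $\fpt(f)=a/b$, and finally splits $G_f(z)$ along congruence classes of $e$ using the periodicity of $ap^e \bmod b$, computing the limit from the resulting explicit geometric-type series. You instead work directly with the digit recursion $a_{e+1}=pa_e-r_e$, $r_e\in\{0,\dots,p-1\}$, coming from~\eqref{eqn:mu}, and the identity $(1-pz)G_f(z)=1-zR(z)$. This buys two things: the limit formula $\lim_{z\to1/p}(1-pz)G_f(z)=\fpt(f)$ falls out of the telescoping sum with no rationality input and no appeal to \cite{MTW}, and rationality of $G_f$ is reduced transparently to eventual periodicity of the base-$p$ digits of $1-\fpt(f)$. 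Two small points you should make explicit. First, \cite{KLZ:JALG} requires excellence; as in the paper, pass to the $\frakm$-adic completion, noting that the $\mu_f(p^e)$ are unchanged. Second, the sequence $(r_e)$ is handed to you by the recursion rather than produced by the division algorithm, so you should say why \emph{any} base-$p$ digit sequence representing a rational number is eventually periodic: a number in $[0,1)$ has at most two such representations, the greedy one (eventually periodic by finiteness of remainders) and possibly a second differing only by a terminating tail versus a tail of repeated $(p-1)$'s, both eventually periodic. With those two sentences added, the proof is complete.
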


\begin{proof}
Since the numbers $\mu_f(p^e)$ are unchanged when $R$ is replaced by its $\frakm$-adic completion, there is no loss of generality in assuming that $R$ is a complete regular local ring; the rationality of $\fpt(f)$ now follows from~\cite[Theorems~3.1,~4.1]{KLZ:JALG}. Let $\fpt(f)=a/b$ for integers $a$ and $b$. By~\cite[Proposition~1.9]{MTW}, one has
\[
\mu_f(p^e)\ =\ \lceil p^e\,\fpt(f)\rceil\ =\ \left\lceil{\frac{ap^e}{b}}\right\rceil
\qquad\text{for each }q=p^e\,.
\]
Suppose $ap^{e_0}\equiv ap^{e_0+e_1}\mod b$ for integers $e_0$ and $e_1$. Then $ap^{e_0}\equiv ap^{e_0+ke_1}\mod b$ for each integer~$k\ge0$. Hence there exists an integer $c$ such that
\begin{align*}
H(z)\ := \ \sum_{k\ge0}\mu_f(p^{e_0+ke_1})z^{e_0+ke_1}\ &=\ \sum_{k\ge0}\left\lceil{\frac{ap^{e_0+ke_1}}{b}}\right\rceil z^{e_0+ke_1}\\
&=\ \sum_{k\ge0}\frac{ap^{e_0+ke_1}+c}{b}\ z^{e_0+ke_1}\,,
\end{align*}
is a rational function of $z$ with a simple pole at $z=1/p$. Moreover,
\[
\lim_{z\to 1/p}(1-pz)\,H(z)\ =\ \frac{a}{be_1}\,.
\]
Partitioning the integers $e\ge e_0$ into the congruence classes module $e_1$, it follows that $G_f(z)$ is the sum of a polynomial in $z$ and $e_1$ rational functions of the form
\[
\sum_{k\ge 0}\mu_f(p^{\ell+ke})z^{\ell+ke}\,.
\]
The assertions regarding the pole and the limit now follow.
\end{proof}

The theorem holds as well in the graded setting.

\section{Preliminary results}

We record some elementary calculations that will be used later. Here, and in the following sections, $R$ will denote a polynomial ring $K[x_0,\dots,x_n]$ over a field $K$ of characteristic~$p>0$, and $\frakm$ will denote its homogeneous maximal ideal. By the \emph{Jacobian ideal} of a polynomial~$f$, we mean the ideal generated by the partial derivatives
\[
f_{x_i}\ :=\ \del\!f/\del x_i\quad\text{ for }\ 0\le i\le n\,.
\]
If $f$ is homogeneous of degree coprime to $p$, then the Euler identity ensures that $f$ is an element of the Jacobian ideal; this is then the defining ideal of the singular locus of $R/fR$.

\begin{lemma}
\label{lemma:jacobian}
Let $f$ be a homogeneous polynomial of degree $d$ in $K[x_0,\dots,x_n]$ such that the Jacobian ideal $J$ of $f$ is $\frakm$-primary. Then
\[
\frakm^{(n+1)(d-2)+1}\ \subseteq\ J\,.
\]
\end{lemma}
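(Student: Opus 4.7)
The plan is to observe that the hypothesis makes $J$ a complete intersection ideal, and then read off the claimed vanishing from the standard socle degree formula.

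First I would note that the Jacobian ideal $J$ is generated by the $n+1$ homogeneous polynomials $f_{x_0},\dots,f_{x_n}$, each of degree $d-1$. Since $J$ is $\frakm$-primary and $R$ has dimension $n+1$, these generators form a homogeneous system of parameters. Because $R$ is Cohen--Macaulay, a homogeneous system of parameters is automatically a regular sequence, so $R/J$ is a graded complete intersection of socle degree
\[
\sum_{i=0}^{n}(d-1) - (n+1)\ =\ (n+1)(d-2).
\]

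I would then conclude: because $R/J$ is Artinian with top nonzero graded piece in degree $(n+1)(d-2)$, every homogeneous element of $R$ of degree strictly greater than $(n+1)(d-2)$ lies in $J$, i.e.\ $\frakm^{(n+1)(d-2)+1}\subseteq J$, as claimed.

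The socle-degree assertion is a standard fact, but if one wants to see it explicitly one can compute the Hilbert series: for a regular sequence of $n+1$ homogeneous elements of degree $d-1$ in a polynomial ring on $n+1$ variables, the Hilbert series of $R/J$ is
\[
\frac{(1-z^{d-1})^{n+1}}{(1-z)^{n+1}}\ =\ (1+z+\cdots+z^{d-2})^{n+1},
\]
whose top degree is $(n+1)(d-2)$. There is no serious obstacle here; the only point requiring the hypothesis on $J$ is that $f_{x_0},\dots,f_{x_n}$ form a regular sequence, which is exactly the content of $\frakm$-primariness together with Cohen--Macaulayness of $R$.
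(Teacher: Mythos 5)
Your proof is correct and follows essentially the same route as the paper: both use that $\frakm$-primariness makes the degree-$(d{-}1)$ partials a regular sequence, so $R/J$ is a graded complete intersection whose Hilbert series $(1+t+\cdots+t^{d-2})^{n+1}$ vanishes above degree $(n+1)(d-2)$. No issues.
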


\begin{proof}
Since $J$ is $\frakm$-primary, it is a complete intersection ideal. As it is generated by forms of degree $d-1$, the Hilbert-Poincar\'e series of $R/J$ is
\[
P(R/J,t)\ =\ \frac{(1-t^{d-1})^{n+1}}{(1-t)^{n+1}}\ =\ (1+t+t^2+\dots+t^{d-2})^{n+1}\,.
\]
It follows that $R/J$ has no nonzero elements of degree greater than $(n+1)(d-2)$.
\end{proof}

\begin{lemma}
\label{lemma:colon}
Let $R=K[x_0,\dots,x_n]$ and $\frakm^{[q]}=(x_0^q,\dots,x_n^q)$. Then
\[
\frakm^{[q]}:_R\frakm^{(n+1)(d-2)+1}\ \subseteq\ \frakm^{[q]}+\frakm^{(n+1)(q-d+1)}\,,
\]
where $\frakm^i=R$ for $i\le0$.
\end{lemma}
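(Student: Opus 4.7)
The plan is to reduce the inclusion to a Gorenstein duality statement for the graded Artinian complete intersection $A:=R/\frakm^{[q]}$, exploiting that the socle of $A$ sits in a single explicit top degree complementary to the exponent $(n+1)(d-2)+1$.

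Since $\frakm^{[q]}$ and $\frakm^{(n+1)(d-2)+1}$ are homogeneous ideals, so is their colon ideal on the left-hand side, and I may assume the element $g$ to be analyzed is homogeneous of some degree $\delta$. If $\delta\ge(n+1)(q-d+1)$, then $g\in\frakm^{(n+1)(q-d+1)}$ and the conclusion is immediate. Thus it suffices to treat the case $\delta<(n+1)(q-d+1)$ and argue, in this case, that $g\in\frakm^{[q]}$, i.e., that $\bar g=0$ in $A$.

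For this I would use that $A$ has Hilbert-Poincar\'e series $(1+t+\dots+t^{q-1})^{n+1}$, so it is concentrated in degrees $0,\dots,N$ for $N:=(n+1)(q-1)$, with one-dimensional socle $A_N$ spanned by the class of $x_0^{q-1}\cdots x_n^{q-1}$. The multiplication pairing
\[
A_\delta\times A_{N-\delta}\ \longrightarrow\ A_N\ \cong\ K
\]
is then perfect---either by general graded Gorenstein duality, or directly from the monomial basis, via the identity $x^a\cdot x^{(q-1,\dots,q-1)-a}=x_0^{q-1}\cdots x_n^{q-1}$ in $A$. In particular, $\bar g=0$ if and only if $\bar g\cdot A_{N-\delta}=0$ in $A_N$.

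It thus remains to show that $A_{N-\delta}$ is contained in the image of $\frakm^{(n+1)(d-2)+1}$ in $A$, which reduces to the degree inequality $N-\delta\ge(n+1)(d-2)+1$; this is immediate from $\delta<(n+1)(q-d+1)$ together with $N=(n+1)(q-1)$. The real content of the lemma is this numerical matching: the exponent $(n+1)(d-2)+1$ appearing in the hypothesis is precisely complementary, with respect to the socle degree $N$ of $R/\frakm^{[q]}$, to the exponent $(n+1)(q-d+1)-1$ on the right. I do not anticipate any genuine obstacle beyond recognizing the correct duality to invoke.
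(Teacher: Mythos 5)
Your proof is correct. It runs through the same underlying duality as the paper's, but in a dual and more self-contained formulation: you work inside the Artinian complete intersection $A=R/\frakm^{[q]}$ and use the perfect pairing $A_\delta\times A_{N-\delta}\to A_N\cong K$ with $N=(n+1)(q-1)$, which you can verify directly on the monomial basis, whereas the paper works in $H^{n+1}_\frakm(R)$ and uses the fact that an element annihilated by $\frakm^k$ must live in degrees $\ge -n-k$ (these two statements are graded Matlis duals of one another). Your route buys elementarity --- no local cohomology or \v Cech complexes are needed, only the observation that $x^a\cdot x^{(q-1,\dots,q-1)-a}$ spans the socle --- and your degree bookkeeping ($N-\delta\ge(n+1)(d-2)+1$ iff $\delta<(n+1)(q-d+1)$) matches the paper's exactly. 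What the paper's version buys is slightly more: it proves the clean equality $\frakm^{[q]}:_R\frakm^k=\frakm^{[q]}+\frakm^{nq+q-n-k}$ for every $k$ in the relevant range (with the pigeonhole case $k\ge nq+q-n$ handled separately), of which the stated lemma is the specialization $k=(n+1)(d-2)+1$; your argument as written only gives the inclusion needed, though it would yield the equality with no extra effort since the pairing argument is reversible.
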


\begin{proof}
We prove, more generally, that
\[
\frakm^{[q]}:_R\frakm^k\ =\
\begin{cases}
\frakm^{[q]}+\frakm^{nq+q-n-k}&\text{ if }\ 0\le k\le nq+q-n\,,\\
R&\text{ if }\ k\ge nq+q-n\,.
\end{cases}
\]
Suppose $r$ is a homogeneous element of $\frakm^{[q]}:_R\frakm^k$. Computing the local cohomology module~$H^{n+1}_\frakm(R)$ via a \v Cech complex on $x_0,\dots,x_n$, the element
\[
\left[\frac{r}{x_0^q\cdots x_n^q}\right]\ \in\ H^{n+1}_\frakm(R)
\]
is annihilated by $\frakm^k$, and hence lies in ${\left[H^{n+1}_\frakm(R)\right]}_{\ge -n-k}$. If $r\notin\frakm^{[q]}$, then
\[
\deg r-(n+1)q\ \ge\ -n-k\,,
\]
i.e., $r\in\frakm^{nq+q-n-k}$. The pigeonhole principle implies that $\frakm^{nq+q-n}$ is contained in $\frakm^{[q]}$, which gives the rest.
\end{proof}

\begin{lemma}
\label{lemma:bound}
Let $f$ be a homogeneous polynomial of degree $d$ in $K[x_0,\dots,x_n]$, such that the Jacobian ideal of $f$ is $\frakm$-primary. If $\mu_f(q)$ is not a multiple of $p$, then 
\[
\mu_f(q)\ \ge\ \frac{(n+1)(q+1)-nd}{d}\,.
\]
\end{lemma}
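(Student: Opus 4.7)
The plan is to exploit the Jacobian ideal via differentiation, combining the hypothesis $p\nmid\mu_f(q)$ with Lemmas~\ref{lemma:jacobian} and~\ref{lemma:colon}. Set $t:=\mu_f(q)-1$, so that $f^{t+1}\in\frakm^{[q]}$ but $f^t\notin\frakm^{[q]}$. Writing $f^{t+1}=\sum_i r_i x_i^q$ and differentiating with respect to $x_j$, one kills each term $\del(x_i^q)/\del x_j$ since $q=p^e$ is divisible by $p$, and so
\[
(t+1)\,f^t f_{x_j}\ \in\ \frakm^{[q]} \qquad\text{for every }j.
\]
By hypothesis $t+1=\mu_f(q)$ is a unit in $K$, so $f^t J\subseteq\frakm^{[q]}$, where $J$ is the Jacobian ideal of $f$.

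Next, I would plug in the earlier lemmas. By Lemma~\ref{lemma:jacobian} one has $\frakm^{(n+1)(d-2)+1}\subseteq J$, whence $f^t\cdot\frakm^{(n+1)(d-2)+1}\subseteq\frakm^{[q]}$, i.e.,
\[
f^t\ \in\ \frakm^{[q]}:_R\frakm^{(n+1)(d-2)+1}.
\]
Applying Lemma~\ref{lemma:colon} then yields $f^t\in\frakm^{[q]}+\frakm^{(n+1)(q-d+1)}$.

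Finally, I would extract the numerical bound using homogeneity. Since $f^t$ is homogeneous of degree $dt$ and both summand ideals are homogeneous, write $f^t=a+b$ with $a\in\frakm^{[q]}$ and $b\in\frakm^{(n+1)(q-d+1)}$, both homogeneous of degree $dt$. Because $f^t\notin\frakm^{[q]}$, the form $b$ is nonzero, so the ideal $\frakm^{(n+1)(q-d+1)}$ contains a nonzero form of degree $dt$; assuming $q\ge d$, this forces $dt\ge(n+1)(q-d+1)$, which rearranges to $d\,\mu_f(q)\ge(n+1)(q+1)-nd$, the asserted inequality. The edge case $q<d$ is automatic, since the right-hand side of the desired inequality is then bounded above by $1\le\mu_f(q)$.

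I do not expect a serious obstacle: the only slightly delicate point is the differentiation step, which relies on $p\mid q$ to kill $q\,x_j^{q-1}$ (automatic since $q=p^e$) and on the hypothesis $p\nmid(t+1)$ to invert the Leibniz factor; everything else is a direct invocation of the two preceding lemmas together with a degree count.
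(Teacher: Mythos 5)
Your proof is correct and follows essentially the same route as the paper's: differentiate $f^{\mu_f(q)}\in\frakm^{[q]}$ to get $f^{\mu_f(q)-1}J\subseteq\frakm^{[q]}$ using $p\nmid\mu_f(q)$, then apply Lemmas~\ref{lemma:jacobian} and~\ref{lemma:colon} and conclude by a degree count on $f^{\mu_f(q)-1}\notin\frakm^{[q]}$. Your explicit treatment of the edge case $q<d$ (where the paper's degree inequality is vacuously true since the exponent $(n+1)(q-d+1)$ is nonpositive) is a harmless addition.
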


\begin{proof}
Set $k:=\mu_f(q)$, i.e., $k$ is the least integer such that
\[
f^k\ \in\ \frakm^{[q]}\,.
\]
Applying the differential operators $\del/\del x_i$ to the above, we see that
\[
kf^{k-1}f_{x_i}\ \in\ \frakm^{[q]}\qquad\text{ for each }i\,,
\]
since $\del/\del x_i$ maps elements of $\frakm^{[q]}$ to elements of $\frakm^{[q]}$. As $k$ is nonzero in $K$, one has
\[
f^{k-1}J\ \subseteq\ \frakm^{[q]}\,,
\]
where $J$ is the Jacobian ideal of $f$. Lemma~\ref{lemma:jacobian} now implies that
\[
f^{k-1}\frakm^{(n+1)(d-2)+1}\ \subseteq\ \frakm^{[q]}\,.
\]
By Lemma~\ref{lemma:colon}, we then have
\[
f^{k-1}\ \in\ \frakm^{[q]}+\frakm^{(n+1)(q-d+1)}\,.
\]
But $f^{k-1}\notin\frakm^{[q]}$ by the minimality of $k$, so $f^{k-1}$ has degree at least $(n+1)(q-d+1)$, i.e.,
\[
d(k-1)\ \ge\ (n+1)(q-d+1)\,;
\]
rearranging the terms, one obtains the desired inequality
\[
k\ \ge\ \frac{(n+1)(q+1)-nd}{d}\,.
\qedhere
\]
\end{proof}

\begin{lemma}
\label{lemma:patterns}
Let $f$ be a homogeneous polynomial of degree $d$ in $K[x_0,\dots,x_n]$, such that the Jacobian ideal of $f$ is $\frakm$-primary. 
\begin{enumerate}[\quad\rm(1)]
\item If \ $\displaystyle{\frac{\mu_f(q)-1}{q-1}=\frac{n+1}{d}}$ \ for some $q=p^e$, then \ $\displaystyle{\frac{\mu_f(pq)-1}{pq-1}=\frac{n+1}{d}}$.

\item Suppose $p\ge nd-d-n$. If \ $\displaystyle{\frac{\mu_f(q)}{q}<\frac{n+1}{d}}$ \ for some $q=p^e$, then $\mu_f(pq)=p\mu_f(q)$.
\end{enumerate}
\end{lemma}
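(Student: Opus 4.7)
I would handle the two parts separately.

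\textbf{Part (2).} The approach is by contradiction. Set $a := \mu_f(q)$ and $b := \mu_f(pq)$, and suppose $b < pa$. Then~\eqref{eqn:mu} places $b$ in the interval $[pa - p + 1,\, pa - 1]$; since $pa$ is the only multiple of $p$ in $[pa - p + 1,\, pa]$ and it has been excluded, we conclude $p \nmid b$. Lemma~\ref{lemma:bound} applied to $pq$ then gives
\[
b \ \ge\ \frac{(n+1)(pq+1) - nd}{d}\,.
\]
Since $ad$ and $(n+1)q$ are integers, the hypothesis reads $ad \le (n+1)q - 1$; combining this with $b \le pa - 1$ and the displayed inequality, a direct rearrangement produces $p \le nd - d - n - 1$, contradicting $p \ge nd - d - n$. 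Hence $b = pa$, as claimed.

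\textbf{Part (1).} The idea is to pin down $\mu_f(pq)$ by tracking what happens in the top (one-dimensional) degree $(n+1)(pq - 1)$ of $R/\frakm^{[pq]}$, which is spanned by $(x_0\cdots x_n)^{pq - 1}$. The hypothesis $d(\mu_f(q) - 1) = (n+1)(q-1)$ forces $f^{\mu_f(q) - 1}$ to sit in the top degree of $R/\frakm^{[q]}$, so
\[
f^{\mu_f(q) - 1} \ \equiv\ A\cdot (x_0\cdots x_n)^{q - 1} \pmod{\frakm^{[q]}}
\]
with $A \in K$ nonzero (because $f^{\mu_f(q) - 1} \notin \frakm^{[q]}$). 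Raising to the $p$-th power, and using that the $p$-th power of any element of $\frakm^{[q]}$ lies in $\frakm^{[pq]}$, gives
\[
f^{p(\mu_f(q) - 1)} \ \equiv\ A^p\cdot (x_0\cdots x_n)^{p(q-1)} \pmod{\frakm^{[pq]}}.
\]
Multiplying both sides by $f^M$ with $M := (p-1)(n+1)/d$ raises the exponent to $(pq - 1)(n+1)/d$, and picking out the coefficient of $(x_0\cdots x_n)^{pq - 1}$ on the right (which reduces to picking the coefficient of $(x_0\cdots x_n)^{p - 1}$ in $f^M$) yields
\[
f^{(pq - 1)(n+1)/d} \ \equiv\ A^p\cdot B\cdot (x_0\cdots x_n)^{pq - 1} \pmod{\frakm^{[pq]}},
\]
where $B$ denotes this coefficient. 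Since $f^{1 + (pq - 1)(n+1)/d}$ has degree exceeding $(n+1)(pq - 1)$ and therefore lies in $\frakm^{[pq]}$ automatically, the target equality $\mu_f(pq) = 1 + (pq-1)(n+1)/d$ reduces to the nonvanishing of $B$.

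The step I expect to require the most care is this nonvanishing of $B$. To handle it, define $A(p^k)$ as the coefficient of $(x_0\cdots x_n)^{p^k - 1}$ in $f^{(p^k - 1)(n+1)/d}$ for each $k\ge 0$. Rerunning the calculation above with $q$ replaced by $p^k$ produces the recursion $A(p^{k+1}) = A(p^k)^p\cdot B$, and with the base case $A(1) = 1$ this iterates to the closed form $A(p^k) = B^{(p^k - 1)/(p-1)}$. If $B$ were zero then $A(p^k)$ would vanish for every $k \ge 1$; but the hypothesis translates precisely to $A(p^e) = A(q) \neq 0$, forcing $B \neq 0$. Throughout, one uses that $M$ is an integer for the above manipulations to make sense; this is automatic in the Calabi-Yau case $d = n+1$ relevant to the main theorem, where $M = p-1$.
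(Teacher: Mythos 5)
Your part (2) is essentially verbatim the paper's argument: rule out $p\mid\mu_f(pq)$ via \eqref{eqn:mu}, apply Lemma~\ref{lemma:bound} at level $pq$, and combine with $\mu_f(pq)\le p\mu_f(q)-1$ and $d\mu_f(q)\le(n+1)q-1$ to contradict $p\ge nd-d-n$. For part (1) you use the same underlying idea as the paper---work in the one-dimensional top-degree component of $R/\frakm^{[q]}$ spanned by $(x_0\cdots x_n)^{q-1}$---but your execution is more careful and actually supplies a step the paper elides. The paper asserts in one line that if $f^{\mu_f(q)-1}$ generates the socle of $R/\frakm^{[q]}$ then $f^{(n+1)(pq-1)/d}$ generates the socle of $R/\frakm^{[pq]}$; unwinding that assertion leads exactly to your congruence $f^{(n+1)(pq-1)/d}\equiv A^pB\,(x_0\cdots x_n)^{pq-1}$ modulo $\frakm^{[pq]}$, so the claim silently rests on the nonvanishing of $B$, the coefficient of $(x_0\cdots x_n)^{p-1}$ in $f^{(p-1)(n+1)/d}$. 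Your recursion $A(p^{k+1})=A(p^k)^pB$ with $A(1)=1$, hence $A(p^k)=B^{(p^k-1)/(p-1)}$, is precisely the missing justification: the hypothesis at $q=p^e$ says $A(q)\ne0$, which forces $B\ne0$ and then $A(pq)=A(q)^pB\ne0$. You are also right to flag the integrality of $M=(p-1)(n+1)/d$; the same issue is latent in the paper's proof (its exponent $(pq-1)/(q-1)$ is not an integer once $q>p$), it is needed for the conclusion even to be an integer identity, and it does hold in every application of the lemma in the paper ($d=n+1$ for Calabi-Yau hypersurfaces, and $d=4$, $n=2$, $p\equiv1\bmod4$ for the quartic case). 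In short, your proof is correct, and for part (1) it is a more complete version of the paper's socle argument.
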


\begin{proof}
(1) Since $f^{\mu_f(q)-1}$ has degree $(q-1)(n+1)$ and is not an element of $\frakm^{[q]}$, it must generate the socle in $R/\frakm^{[q]}$. But then
\[
\left(f^{\mu_f(q)-1}\right)^{\frac{pq-1}{q-1}}
\]
generates the socle in $R/\frakm^{[pq]}$, so
\[
\mu_f(pq)-1\ =\ \big(\mu_f(q)-1\big)\left(\frac{pq-1}{q-1}\right)\,.
\]

For (2), suppose that $\mu_f(pq)<p\mu_f(q)$. Then $\mu_f(pq)$ is not a multiple of $p$ by~\eqref{eqn:mu}. Lemma~\ref{lemma:bound} thus implies that
\[
(n+1)(pq+1)-nd\ \le\ d\mu_f(pq)\,.
\]
Combining with $\mu_f(pq)\le p\mu_f(q)-1$ and $d\mu_f(q)\le q(n+1)-1$, we obtain
\[
p\ \le\ nd-d-n-1\,,
\]
which contradicts the assumption on $p$.
\end{proof}

We next prove a result on the injectivity of the Frobenius action on negatively graded components of local cohomology modules:

\begin{theorem}
\label{thm:highdeg}
Let $K$ be a field of characteristic $p>0$. Let $f$ be a homogeneous polynomial of degree $d$ in $R=K[x_0,\dots,x_n]$, such that the Jacobian ideal of $f$ is primary to the homogeneous maximal ideal $\frakm$ of $R$. 

If $p\ge nd-d-n$, then the Frobenius action below is injective:
\[
F\colon{\left[H^n_\frakm(R/fR)\right]}_{<0}\to{\left[H^n_\frakm(R/fR)\right]}_{<0}\,.
\]
\end{theorem}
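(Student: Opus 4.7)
The plan is to use the commutative diagram~\eqref{equation:cd} with $q = p$ and $t = 1$, which identifies the Frobenius on $[H^n_\frakm(R/fR)]_m$ with the restriction of $f^{p-1}F$ to the subspace $\ker(f) \cap [H^{n+1}_\frakm(R)]_{m-d}$ of $H^{n+1}_\frakm(R)$. A counterexample to injectivity would therefore produce a class $\omega = [g/(x_0\cdots x_n)^q] \in H^{n+1}_\frakm(R)$, for some $q = p^e$, satisfying $g \notin \frakm^{[q]}$, $fg \in \frakm^{[q]}$, and $f^{p-1}g^p \in \frakm^{[pq]}$. The hypothesis $m<0$ forces the degree bound $\deg g \le (n+1)q - d - 1$, and the goal is to derive the contradictory conclusion $g \in \frakm^{[q]}$.

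The engine of the argument is the observation, special to characteristic $p$, that each partial derivative $\partial/\partial x_i$ kills $g^p$ and preserves $\frakm^{[pq]}$. Applying $\partial/\partial x_i$ to $f^{p-1}g^p \in \frakm^{[pq]}$ yields $(p-1)f^{p-2}f_{x_i}g^p \in \frakm^{[pq]}$, and hence $f^{p-2}g^p J \subseteq \frakm^{[pq]}$, where $J$ is the Jacobian ideal of $f$. Feeding this into Lemma~\ref{lemma:jacobian} and then Lemma~\ref{lemma:colon} gives
\[
f^{p-2}g^p \ \in\ \frakm^{[pq]} + \frakm^{(n+1)(pq-d+1)}.
\]
Since $f^{p-2}g^p$ is homogeneous of degree at most $p(n+1)q - 2d - p$, a direct calculation shows that the hypothesis $p \ge nd-d-n$ forces this degree strictly below $(n+1)(pq-d+1)$, so the second summand contributes nothing in this degree and we obtain $f^{p-2}g^p \in \frakm^{[pq]}$.

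Propagating this improvement down the chain of exponents of $f$ forms the main body of the argument. For each $k$ with $0 \le k \le p-2$ the coefficient $p-1-k$ is a unit in~$K$, so the same cycle of differentiation, Lemma~\ref{lemma:jacobian}, Lemma~\ref{lemma:colon}, and degree comparison upgrades $f^{p-1-k}g^p \in \frakm^{[pq]}$ to $f^{p-2-k}g^p \in \frakm^{[pq]}$. The numerical inequality needed at step $k$ reduces to $(n-1-k)d - n - 1 < p$, which is most restrictive at $k=0$; thus the single hypothesis $p \ge nd - d - n$ suffices at every step. Taking $k = p - 2$ yields $g^p \in \frakm^{[pq]}$, and the flatness of Frobenius on the regular ring~$R$ then gives $g \in \frakm^{[q]}$, contradicting the non-vanishing of $\omega$.

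The main technical obstacle is calibrating the degree estimates so that the single hypothesis on $p$ eliminates the ``error'' summand $\frakm^{(n+1)(pq-d+1)}$ at every step of the iteration simultaneously; once that is arranged the induction runs smoothly.
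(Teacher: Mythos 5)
Your proposal is correct and follows essentially the same route as the paper: reduce via the diagram~\eqref{equation:cd} with $t=1$ to a statement about $f^{p-1}g^p\in\frakm^{[pq]}$, then strip off powers of $f$ one at a time by differentiating, invoking Lemma~\ref{lemma:jacobian} and Lemma~\ref{lemma:colon}, and ruling out the summand $\frakm^{(n+1)(pq-d+1)}$ by the degree bound coming from $p\ge nd-d-n$. The only cosmetic difference is that you run an explicit downward induction on the exponent of $f$, whereas the paper packages the same computation by taking $k$ minimal with $f^kg^p\in\frakm^{[pq]}$ and deriving a contradiction from $k>0$ in a single step; the numerics are identical.
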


\begin{proof}
Using~\eqref{equation:cd} in the case $t=1$ and $e=1$, and restricting to the relevant graded components, we have the diagram with exact rows
\[
\CD
0@>>>{\left[H^n_\frakm(R/fR)\right]}_{\le-1}@>>>{\left[H^{n+1}_\frakm(R)\right]}_{\le-d-1}@>>>\cdots\phantom{\,.}\\
@. @VVFV @VVf^{p-1}FV\\
0@>>>{\left[H^n_\frakm(R/fR)\right]}_{\le-p}@>>>{\left[H^{n+1}_\frakm(R)\right]}_{\le-d-p}@>>>\cdots\,.
\endCD
\]
Thus, it suffices to prove the injectivity of the map
\[
f^{p-1}F\colon{\left[H^{n+1}_\frakm(R)\right]}_{\le-d-1}\to{\left[H^{n+1}_\frakm(R)\right]}_{\le-d-p}\,.
\]
A homogeneous element of ${[H^{n+1}_\frakm(R)]}_{\le-d-1}$ may be written as
\[
\left[\frac{g}{(x_0\cdots x_n)^{q/p}}\right]
\]
for some $q$, where $g\in R$ is homogeneous of degree at most $(n+1)q/p-d-1$. Suppose
\[
f^{p-1}F\left( \left[\frac{g}{(x_0\cdots x_n)^{q/p}}\right] \right)\ =\ 0\,,
\]
then it follows that $f^{p-1}g^p\in\frakm^{[q]}$. Let $k$ be the least integer with
\[
f^kg^p\ \in\ \frakm^{[q]}\,,
\]
and note that $0\le k\le p-1$. If $k$ is nonzero, then applying $\del/\del x_i$ we see that
\[
f^{k-1}g^pJ\ \subseteq\ \frakm^{[q]}\,.
\]
Lemma~\ref{lemma:jacobian} and Lemma~\ref{lemma:colon} show that
\[
f^{k-1}g^p\ \in\ \frakm^{[q]}+\frakm^{(n+1)(q-d+1)}\,.
\]
Since $f^{k-1}g^p\notin\frakm^{[q]}$, we must have
\[
\deg f^{k-1}g^p\ \ge\ (n+1)(q-d+1)\,.
\]
Using $k\le p-1$ and $\deg g^p\le q(n+1)-pd-p$, this gives
\[
nd-d-n-1\ \ge\ p\,,
\]
contradicting the assumption on $p$. It follows that $k=0$, i.e., that $g^p\in\frakm^{[q]}$. But then
\[
\left[\frac{g}{(x_0\cdots x_n)^{q/p}}\right]\ =\ 0
\]
in $H^{n+1}_\frakm(R)$, which proves the desired injectivity.
\end{proof}

\begin{remark}
Theorem~\ref{thm:highdeg} is equivalent to the following geometric statement: if $X$ is a smooth hypersurface of degree $d$ in $\PP^n$, then the map
\[
H^{n-1}(X,\calO_X(j))\to H^{n-1}(X,\calO_X(jp))\,,
\]
induced by Frobenius map on $X$, is injective for $j<0$ and $p\ge nd-d-n$. This statement indeed admits a geometric proof based on the Deligne-Illusie method \cite{DI}. One views the de Rham complex $\Omega^*_{X/K}$ as an $\calO_{X^{(1)}}$-complex, where $X^{(1)}$ is the Frobenius twist of $X$ over~$K$, and twists it over the latter with $\calO_{X^{(1)}}(j)$. For $p>n-1$, the Deligne-Illusie decomposition $\Omega^*_{X/K} \simeq \oplus_i \Omega^i_{X^{(1)}/K}[-i]$, which is available as $X$ clearly lifts to $W_2(K)$, reduces the above injectivity statement to proving $H^{n-1-i}(X,\Omega^i_{X/K}(jp)) = 0$ for $i > 0$ and $j < 0$. If $p \ge nd-d-n$, this vanishing can be proven using standard sequences (details omitted).
\end{remark}

\section{Calabi-Yau hypersurfaces}

We get to the main theorem; see below for the definition of the Hasse invariant.

\begin{theorem}
\label{thm:CalabiYau}
Let $K$ be a field of characteristic $p>0$, and $n$ a positive integer. Let $f$ be a homogeneous polynomial of degree $n+1$ in $R=K[x_0,\dots,x_n]$, such that the Jacobian ideal of~$f$, i.e., the ideal $(f_{x_0},\dots,f_{x_n})$, is primary to the homogeneous maximal ideal of $R$. Then:
\begin{enumerate}[\quad\rm(1)]
\item $\mu_f(p)=p-h$, where $h$ is an integer with $0\le h\le n-1$,
\item $\mu_f(pq)=p\mu_f(q)$ for all $q=p^e$ with $q\ge n-1$.
\item If $p\ge n-1$, then $\displaystyle{G_f(z)=\frac{1-hz}{1-pz}}$ \ and \ $\displaystyle{\fpt(f)=1-\frac{h}{p}}$\,, where $0\le h\le n-1$.
\item Set $X=\Proj R/fR$. If $p\ge n^2-n-1$, then the integer $h$ in $(1)$ is the order of vanishing of the Hasse invariant on the versal deformation space of $X \subset \PP^n$.
\end{enumerate}
\end{theorem}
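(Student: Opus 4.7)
Parts (1)–(3) follow from the preliminaries of Section~3 via short combinatorial arguments. For~(1), applying Lemma~\ref{lemma:bound} with $d = n+1$ and $q = p$ gives $\mu_f(p) \ge p + 1 - n$ unless $\mu_f(p)$ is a multiple of $p$; since $1 \le \mu_f(p) \le p$, the only such multiple is $p$ itself, so $h := p - \mu_f(p)$ lies in $\{0, 1, \dots, n-1\}$. For~(2), it suffices to treat $q = p^e$ with $e \ge 1$; suppose for contradiction that $\mu_f(pq) < p\mu_f(q)$ for some such $q \ge n - 1$. Then \eqref{eqn:mu} forces $\mu_f(pq)$ not to be a multiple of $p$, and Lemma~\ref{lemma:bound} yields $\mu_f(pq) \ge pq + 1 - n$. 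The monotonicity $\mu_f(p^e)/p^e \le \mu_f(p)/p$ (valid for $e \ge 1$) together with~(1) gives $\mu_f(q) \le q(p-h)/p$, so $\mu_f(pq) \le p\mu_f(q) - 1 \le pq - qh - 1$; combining yields $qh \le n - 2$, forcing $h = 0$ when $q \ge n - 1$. But then $\mu_f(p) = p$ and iteration of Lemma~\ref{lemma:patterns}(1) gives $\mu_f(p^e) = p^e$ for all $e$, contradicting $\mu_f(pq) < p\mu_f(q)$. For~(3), $p \ge n - 1$ ensures~(2) applies for all $e \ge 1$, so $\mu_f(p^e) = p^{e-1}(p - h)$, and summation gives $G_f(z) = (1 - hz)/(1 - pz)$, with $\fpt(f) = \lim_{e\to\infty}\mu_f(p^e)/p^e = 1 - h/p$.

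For~(4), the plan is to realize both $h$ and the order of vanishing of the Hasse invariant as the same quantity extracted from $f^{p-1}$ and its deformations. Identify $H^{n-1}(X, \calO_X) \cong [H^n_\frakm(R/fR)]_0$, one-dimensional and generated by $[1/(x_0\cdots x_n)]$; Frobenius sends this class to $[f^{p-1}/(x_0\cdots x_n)^p]$, whose component along the socle of $R/\frakm^{[p]}$ is, up to a unit, the classical Hasse invariant of $X$. For the versal deformation, choose monomials $\{m_\alpha\}$ forming a $K$-basis of $R_{n+1}/J_{n+1}$ and set $F_s := f + \sum_\alpha s_\alpha m_\alpha$; the Hasse invariant of the family becomes $H(s) \in K[[s_\alpha]]$, equal to the coefficient of $x_0^{p-1}\cdots x_n^{p-1}$ in $F_s^{p-1}$. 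Expanding by the binomial theorem, the $s^k$-component is a linear combination of coefficients of $x_0^{p-1}\cdots x_n^{p-1}$ in $f^{p-1-k}\, m_{\alpha_1}\cdots m_{\alpha_k}$. For $k < h$, the inequality $p - 1 - k \ge p - h = \mu_f(p)$ gives $f^{p-1-k} \in \frakm^{[p]}$, so the $s^k$-term vanishes; hence $H(s)$ vanishes to order at least $h$ at $s = 0$.

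The main obstacle is the matching lower bound: exhibiting some nonzero $s^h$-term of $H(s)$, i.e., producing $\alpha_1, \dots, \alpha_h$ with $f^{p-h-1}\, m_{\alpha_1}\cdots m_{\alpha_h} \notin \frakm^{[p]}$. Differentiating the relation $f^{p-h} \in \frakm^{[p]}$, and using that $h \not\equiv 0 \pmod p$ (valid since $h \le n-1 < p$), gives $J \cdot f^{p-h-1} \subseteq \frakm^{[p]}$, so multiplication by $f^{p-h-1}$ descends to a well-defined pairing $R_{h(n+1)}/J_{h(n+1)} \to (R/\frakm^{[p]})_{(p-1)(n+1)} \cong K$; one must then show this pairing is nontrivial on the image of the multiplication $(R_{n+1}/J_{n+1})^{\otimes h} \to R_{h(n+1)}/J_{h(n+1)}$. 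The hypothesis $p \ge n^2 - n - 1 = nd - d - n$ enters through Theorem~\ref{thm:highdeg}, which allows the $h$-th order deformation term to be matched with the Frobenius action on the $(h+1)$-st thickening of $X$ from Remark~\ref{rmk:fptthickenings}; this Frobenius is nonzero by the very definition of $h$. Combined with Gorenstein duality on the Jacobian algebra $R/J$---whose socle lies in degree $n^2 - 1 \ge h(n+1)$ for $h \le n - 1$---this yields the desired non-vanishing and identifies the order of vanishing of the Hasse invariant with $h$.
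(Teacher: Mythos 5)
Your treatment of parts (1)--(3) is correct and is essentially the paper's argument: Lemma~\ref{lemma:bound} with $d=n+1$ gives (1), the combination of Lemma~\ref{lemma:bound}, \eqref{eqn:mu} and monotonicity gives (2) (the paper phrases the contradiction as $q\le n-2$ rather than $qh\le n-2$, but the two are equivalent after splitting off the case $\mu_f(p)=p$ as you do), and (3) is the same summation.

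For part (4) you take a genuinely different, more computational route than the paper: rather than comparing the absolute Frobenius on thickenings $tX\subset\PP^n$ with the relative Frobenius on thickenings of the fibre $tX'\subset\calX$ (the paper's Lemmas~\ref{lem:identifyhasse} and~\ref{lem:pullbackevinjective} plus the big commutative diagram), you propose to write the Hasse invariant as the explicit polynomial $H(s)=$ coefficient of $(x_0\cdots x_n)^{p-1}$ in $F_s^{p-1}$ and read off its order of vanishing from the binomial expansion. The upper bound $\ord H\ge h$ then really is the one-line observation that $f^{p-1-k}\in\frakm^{[p]}$ for $k<h$. But two steps are asserted rather than proved. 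First, the identity $H(s)=[\text{coeff of }(x_0\cdots x_n)^{p-1}\text{ in }F_s^{p-1}]$ as sections over the formal neighbourhood of $[X]$ (not merely fibrewise at closed points) is exactly the nontrivial comparison between the relative Frobenius $\Frob_\pi$, which defines $H$, and the absolute Frobenius, which produces $F_s^{p-1}$; this is precisely what Lemma~\ref{lem:identifyhasse} (valid only for $t\le p$) and Lemma~\ref{lem:pullbackevinjective} accomplish in the paper, and it cannot be skipped, since the order of vanishing is an infinitesimal statement. Tellingly, once this identity is granted your argument never seems to use $p\ge n^2-n-1$ (only $h\le n-1<p$), whereas the paper needs it via Theorem~\ref{thm:highdeg} to pass between injectivity of $\tilde{F_t}$ on all of $H^n_\frakm(R/fR)$ and on its degree-zero piece; you should locate where that hypothesis actually enters your argument.

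Second, your "main obstacle" --- producing $m_{\alpha_1},\dots,m_{\alpha_h}$ with $f^{p-h-1}m_{\alpha_1}\cdots m_{\alpha_h}\notin\frakm^{[p]}$ --- is not resolved by the tools you name: Theorem~\ref{thm:highdeg} and Remark~\ref{rmk:fptthickenings} concern the Frobenius on thickenings of $X$ inside $\PP^n$ and do not "match" with Taylor coefficients in the $s$-directions without rebuilding the paper's diagram, and Gorenstein duality for $R/J$ (socle degree $n^2-1$) is not the relevant pairing. The correct and elementary argument is socle duality in the Gorenstein ring $R/\frakm^{[p]}$: since $f^{p-h-1}=f^{\mu_f(p)-1}$ is nonzero in $R/\frakm^{[p]}$ and sits in degree $(n+1)(p-1)-h(n+1)$, the perfect pairing into the socle degree $(n+1)(p-1)$ yields $g\in R_{h(n+1)}$ with $f^{p-h-1}g\notin\frakm^{[q]}$; every monomial of degree $h(n+1)$ factors as a product of $h$ monomials of degree $n+1$, so some product works. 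If you insist on the slice transverse to $J$ (which itself needs justification, since the theorem concerns the full embedded deformation space $\Hyp_{n+1}$), you must additionally use that $f^{p-h-1}J\subseteq\frakm^{[p]}$ and that $(R/J)_{h(n+1)}$ is spanned by products of $h$ classes from $(R/J)_{n+1}$, which follows from standard-gradedness. As written, the proposal for (4) has a real gap at both of these points.
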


The deformation space in (4) refers to embedded deformations of $X\subset\PP^n$; if $n\ge 5$, this coincides with the versal deformation space of $X$ as an abstract variety (see Remark~\ref{rmk:VersalDefHyp}). The following example from~\cite{Hernandez2} shows that all possible values of $h$ from (1) above are indeed attained:

\begin{example}
\label{ex:CalabiYau}
Consider $f=x_0^{n+1}+\dots+x_{n+1}^{n+1}$ over a field of prime characteristic $p$ not dividing $n+1$. Let $h$ be an integer such that $p\equiv h+1\mod n+1$ and $0\le h\le n-1$. Then
\[
\fpt(f)\ =\ 1-h/p\,,
\]
for a proof, see~\cite[Theorem~3.1]{Hernandez2}.
\end{example}

\begin{proof}[Proof of Theorem~\ref{thm:CalabiYau}]
If $\mu_f(p)=p$, then Lemma~\ref{lemma:patterns}\,(1) shows that $\mu_f(q)=q$ for all $q$, and assertions (1--3) follow. Assume that $\mu_f(p)<p$. Lemma~\ref{lemma:bound} gives $\mu_f(p)\ge p-n+1$, which proves (1). As $\mu_f(p)\le p-1$, it follows~that
\[
\mu_f(q)\ \le\ q-q/p\qquad\text{ for each }q=p^e\,.
\]
If $\mu_f(pq)<p\mu_f(q)$, then $\mu_f(pq)$ is not a multiple of $p$ by~\eqref{eqn:mu}. Lemma~\ref{lemma:bound} now implies
\[
\mu_f(pq)\ \ge\ pq-n+1\,,
\]
and combining with $\mu_f(pq)\le p\mu_f(q)-1\le pq-q-1$, we see that $pq-n+1\le pq-q-1$, i.e., that $q\le n-2$. This completes the proof of (2), and then (3) follows immediately.

The proof of (4) and the surrounding material occupy the rest of this section.
\end{proof}

\subsection*{The Hasse invariant}

We briefly review the construction of the Hasse invariant for suitable families of varieties in characteristic $p$. Fix a proper flat morphism $\pi\colon\calX\to S$ of relative dimension $N$ between noetherian $\FF_p$-schemes. Assume that the formation of~$\R^i\pi_*\calO_\calX$ is compatible with base change, and that $\omega := \omega_{\calX/S} := \R^N\pi_*\calO_\calX$ is a line bundle; the key example is a family of degree $(n+1)$ hypersurfaces in $\PP^n$. The standard diagram of Frobenius twists of $\pi$ takes the shape
\[
\xymatrix{
\calX \ar@/_/[ddr]_{\pi} \ar@/^/[drr]^{\Frob_\calX} \ar[dr]^-{\Frob_\pi} & & \\
& \calX^{(1)} \ar[d]^-{\pi^{(1)}} \ar[r]^-{\Frob_S} & \calX \ar[d]^-{\pi} \\
& S \ar[r]^-{\Frob_S} & S\,,}
\]
where the square is Cartesian. Our assumption on $\pi$ shows that
\[
\omega_{\calX^{(1)}/S}\ :=\ \R^N \pi_*\calO_{\calX^{(1)}}\ \simeq\ \Frob_S^*\omega\ \simeq\ \omega^p\,.
\]
Using this isomorphism, we define:

\begin{definition}
The \emph{Hasse invariant} $H$ of the family $\pi$ is the element in 
\[
\Hom(\omega_{\calX^{(1)}/S},\omega)\ \simeq\ \Hom(\Frob_S^*\omega,\omega)\ \simeq\ \Hom(\omega^p,\omega)\ \simeq\ H^0(S,\omega^{1-p})\,,
\]
defined by pullback along the relative Frobenius map $\Frob_\pi\colon\calX\to\calX^{(1)}$.
\end{definition}

\begin{remark} 
\label{rmk:hasseflatbasechange}
The formation of the relative Frobenius map $\Frob_\pi\colon\calX\to\calX^{(1)}$ is compatible with base change on $S$. It follows by our assumption on $\pi$ that the formation of $H$ is also compatible with base change. In particular, given a flat morphism $g\colon S' \to S$ and a point $s'\in S'$, the order of vanishing of $H$ at $s'$ coincides with that at $g(s')$. Thus, in proving Theorem~\ref{thm:CalabiYau}\,(4), we may assume that $K$ is perfect.
\end{remark}

To analyze $H$, fix a point $s$ in $S$ and an integer $t\ge0$. Write $t[s]\subset S$ for the order $t$ neighbourhood of $s$, and let $t\!\calX_s\subset\calX$ and $t\!\calX^{(1)}_s\subset\calX^{(1)}$ be the corresponding neighbourhoods of the fibres of $\pi$ and $\pi^{(1)}$. The map $\Frob_\pi$ induces maps $t\!\calX_s\to t\!\calX^{(1)}_s$, and hence maps
\[
\phi_t\colon H^N\big(t\!\calX_{s}^{(1)},\ \calO_{t\!\calX_s^{(1)}}\big)\to H^N\big(t\!\calX_s,\ \calO_{t\!\calX_s}\big)\,.
\]
The order of vanishing of $H$ at $s$ is, by definition, the maximal $t$ such that this map is zero. In favourable situations, one can give a slightly better description of this integer:

\begin{lemma}
\label{lem:identifyhasse}
If the map
\[
\psi_t\colon H^N\big(\calX_s,\ \calO_{X_s}\big) \to H^N\big(t\!\calX_s,\ \calO_{t\!\calX_s}\big)
\]
induced by $\Frob_{\calX}$ is nonzero for some $t\le p$, then the minimal such $t$ is $\ord_s H+1$.
\end{lemma}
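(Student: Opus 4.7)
The plan is to reduce to a local computation at $s$, where both $\phi_t$ and $\psi_t$ become, up to units, ``multiplication by $h$ modulo $\frakm^t$'' for a local equation $h$ of $H$; the lemma then follows immediately. By Remark~\ref{rmk:hasseflatbasechange}, I may replace $S$ by $\Spec(\widehat{\calO_{S,s}})$ and assume that $K$ is perfect. Choose a local generator $\eta$ of $\omega$ and write $H = h\cdot\eta^{1-p}$, so that $\ord_s H = \ord_s h$.

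For the first step, I would use the base-change compatibility of $R^N\pi_*\calO_\calX$ to identify $H^N(t\calX_s, \calO_{t\calX_s})$ with $\omega\otimes_R R/\frakm^t$ and $H^N(t\calX_s^{(1)}, \calO_{t\calX_s^{(1)}})$ with $\omega^p\otimes_R R/\frakm^t$ (the latter via the canonical iso $\omega_{\calX^{(1)}/S}\simeq\omega^p$). Under these identifications $\phi_t$ is induced by $H\colon \omega^p\to\omega$, and so in terms of generators sends $\eta^p\mapsto h\eta$ modulo $\frakm^t$; in particular $\phi_t = 0$ iff $h\in\frakm^t$, and $\max\{t\mid\phi_t = 0\} = \ord_s H$, recovering the definition.

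For the second step, I would use the factorization $\Frob_\calX = b\circ\Frob_\pi$, where $b\colon\calX^{(1)}\to\calX$ is the base change of $\Frob_S$. Since $t\le p$, the absolute Frobenius on $\calX$ restricts to a scheme map $t\calX_s\to\calX_s$ that factors as $t\calX_s\to t\calX_s^{(1)}\to\calX_s$ (with the first arrow $\Frob_\pi$ and the second $b$). Taking $H^N$ yields $\psi_t = \phi_t\circ\alpha_t$ where $\alpha_t$ is pullback along $b$. The heart of the argument is to check that $\alpha_t$ sends the generator $\eta$ of $\omega\otimes_R K$ to a generator of $\omega^p\otimes_R R/\frakm^t$: modulo $\frakm$ this reduces to $\alpha_1$ being an iso, and $\alpha_1$ is pullback along $\calX_s^{(1)}\to\calX_s$, which is an iso of abstract $\FF_p$-schemes since $K$ is perfect; Nakayama lifts this to the thickening, giving $\alpha_t(\eta) = u\cdot\eta^p$ for some unit $u\in(R/\frakm^t)^\times$.

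Combining, $\psi_t(\eta) = u\cdot h\cdot\eta$ modulo $\frakm^t$, so $\psi_t = 0$ iff $h\in\frakm^t$ iff $\phi_t = 0$; hence the smallest $t\le p$ with $\psi_t\neq 0$ is $\ord_s h + 1 = \ord_s H + 1$, as wanted. The main obstacle is the coordinate description of $\alpha_t$ in the second step: one must track the canonical iso $\omega_{\calX^{(1)}/S}\simeq\omega^p$ through the cohomological pullback along $b$ to verify that generators go to generators. Once that is in place, the rest is bookkeeping.
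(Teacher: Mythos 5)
Your argument is correct and follows the same skeleton as the paper's proof: both rest on the factorization $\Frob_\calX = b\circ \Frob_\pi$ over the thickening $t\calX_s$ (valid precisely because $t\le p$), reducing the claim to the fact that pullback along $b$ carries a generator of $H^N(\calX_s,\calO_{\calX_s})$ to a generator of $H^N(t\calX_s^{(1)},\calO)$ as an $\calO_{t[s]}$-module, so that pre-composition with it does not affect the (non)vanishing of $\phi_t$. The one genuine difference is how that key step is executed. The paper phrases it as an adjunction, $H^N(t\calX_s^{(1)},\calO)\simeq H^N(\calX_s,\calO_{\calX_s})\otimes_{\kappa(s)}\calO_{t[s]}$ with the unit hitting a module generator, which needs no hypothesis on $\kappa(s)$; your route (reduce mod $\frakm$, observe $\calX_s^{(1)}\to\calX_s$ is an isomorphism, apply Nakayama) requires $\kappa(s)$ to be perfect. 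Be aware that your appeal to Remark~\ref{rmk:hasseflatbasechange} to justify that reduction is not quite on target: the remark asserts that $\ord_s H$ is insensitive to flat base change, but says nothing about $\psi_t$, which is built from an absolute Frobenius and does not simply base-change. This is harmless for the application in the paper (where $K$ has already been taken perfect before the lemma is invoked), and in any case is easily repaired: you do not need $\alpha_1$ to be an isomorphism, only that its image spans the target over $\kappa(s)$, which holds for any field since the target is $H^N(\calX_s,\calO_{\calX_s})\otimes_{\kappa(s),\Frob}\kappa(s)$ and the map is the unit $v\mapsto v\otimes 1$. With that adjustment your Nakayama step goes through verbatim, and the rest of your computation (including that the precise unit $u$ is irrelevant) is fine.
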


\begin{proof}
For $t \le p$, by the base change assumption on $\R^N\pi_*\calO_{\calX}$, one has 
\[
H^N\big(\calX_s,\ \calO_{X_s}\big)\otimes_{\kappa(s)}\calO_{t[s]}\ \simeq\ H^N\big(t\!\calX^{(1)}_s,\ \calO_{t\!\calX^{(1)}_s}\big)\,,
\]
where $\calO_{t[s]}$ is viewed as $\kappa(s)$-algebra via the composite
\[
\CD
\kappa(s)@>{\Frob_S}>>\calO_{\Frob_S^{-1}[s]}@>{\can}>>\calO_{t[s]}\,,
\endCD
\]
and the isomorphism is induced by the base change $\calX^{(1)} \to \calX$ of $\Frob_S\colon S \to S$. Hence, for such $t$, by adjunction, the map $\phi_t$ induced by $\Frob_\pi$ is nonzero if and only the map $\psi_t$ induced by $\Frob_\calX$ is nonzero. But $\ord_s H$ is the maximal integer $t$ with $\phi_t = 0$.
\end{proof}

It is typically hard to calculate $H$, or even bound its order of vanishing. However, for families of Calabi-Yau hypersurfaces, we have the following remarkable theorem due to Deuring and Igusa; see~\cite{Igusa} for $n=2$, and Ogus~\cite[Corollary~3]{Ogus} in general:

\begin{theorem}
\label{thm:ogusigusadeuring}	
Let $\pi\colon\calX\to\Hyp_{n+1}$ be the universal family of Calabi-Yau hypersurfaces in $\PP^n$. For any point $[Y]\in\Hyp_{n+1}(K)$ corresponding to a smooth hypersurface $Y\subset\PP^n$, we have $\ord_{[Y]}(H)\le n-1$ if $n\le p$.
\end{theorem}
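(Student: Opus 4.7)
The plan is to apply Lemma~\ref{lem:identifyhasse} at $t = n$: since $n \le p$, it suffices to show the Frobenius map $\psi_n\colon H^{n-1}(Y,\calO_Y) \to H^{n-1}(n\calX_{[Y]},\calO_{n\calX_{[Y]}})$ is nonzero, forcing $\ord_{[Y]}(H) + 1 \le n$. Write $Y = V(f) \subset \PP^n$ with $f \in R = K[x_0,\dots,x_n]$ homogeneous of degree $n+1$, choose local coordinates $t_I$ on $\Hyp_{n+1}$ at $[Y]$ indexed by degree-$(n+1)$ monomials $x^I$, and let $A := \calO_{\Hyp_{n+1},[Y]}/\frakM^n$ with $\frakM$ the maximal ideal. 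Then $n\calX_{[Y]}$ is cut out in $\PP^n_A$ by $F := f + \sum_I t_I x^I$, and the exact sequence
\[
0 \to \calO_{\PP^n_A}(-n-1) \xrightarrow{F} \calO_{\PP^n_A} \to \calO_{n\calX_{[Y]}} \to 0
\]
together with the vanishings $H^{n-1}(\PP^n,\calO) = H^n(\PP^n,\calO) = 0$ identifies $H^{n-1}(n\calX_{[Y]},\calO) \cong A$ via the Čech class $[1/(x_0 \cdots x_n)]$ in the degree $-(n+1)$ part of $H^{n+1}_\frakm(R\otimes A)$.

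To compute $\psi_n$, I would rerun the argument of Remark~\ref{rmk:fptthickenings} with the bottom row of~\eqref{equation:cd} replaced by the $F$-resolution above and the left column given by the Frobenius lift $\bar r \mapsto r^p \bmod F$ from $R/fR$ to $(R\otimes A)/F$. This lift is well-defined exactly when $p \ge n$: then $(\sum_I t_I x^I)^p \in \frakM^p \subseteq \frakM^n = 0$ in $A$, forcing $F^p = f^p$ in $R \otimes A$ and making the right square commute. The middle column becomes the $p$-th power map followed by multiplication by $F^{p-1}$, yielding
\[
\psi_n\bigl([1/(x_0\cdots x_n)]\bigr) = \bigl[F^{p-1}/(x_0^p\cdots x_n^p)\bigr] = \bigl([(x_0\cdots x_n)^{p-1}]\,F^{p-1}\bigr) \cdot [1/(x_0\cdots x_n)] \in A,
\]
using that $(x_0\cdots x_n)^{p-1}$ spans the top degree of $R/\frakm^{[p]}$; specializing at $\frakM = 0$ recovers the classical Hasse formula $H(Y) = [(x_0\cdots x_n)^{p-1}]f^{p-1}$ for $Y$ itself.

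To show this coefficient is nonzero in $A$, expand $F^{p-1} = \sum_k \binom{p-1}{k} f^{p-1-k}(\sum_I t_I x^I)^k$ and discard $t$-degrees $\ge n$: the criterion becomes that there exist $k \in \{0,\ldots,n-1\}$ and $g \in R_{k(n+1)}$ with $f^{p-1-k}g \notin \frakm^{[p]}$. Since $R/\frakm^{[p]}$ is an Artinian complete intersection---hence Gorenstein with one-dimensional socle $K\cdot(x_0\cdots x_n)^{p-1}$ in degree $(n+1)(p-1)$---the multiplication pairing in complementary degrees is perfect, so the condition is equivalent to $f^{p-1-k} \notin \frakm^{[p]}$ for some $k < n$. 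Theorem~\ref{thm:CalabiYau}\,(1) supplies $\mu_f(p) \ge p - n + 1$, so $f^{p-n} \notin \frakm^{[p]}$; taking $k = n-1$ gives $\psi_n \ne 0$ and finishes the proof.

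The main obstacle is the second paragraph's identification of $\psi_n$ with the deformed Hasse coefficient $[(x_0\cdots x_n)^{p-1}]F^{p-1} \in A$: this requires building the diagrammatic analogue of Remark~\ref{rmk:fptthickenings} in the deformation-theoretic context, with the assumption $n \le p$ entering precisely in the well-definedness of the Frobenius lift modulo $F$. Once this formula is established, the remainder is standard commutative algebra combining Gorenstein duality on $R/\frakm^{[p]}$ with the previously proven lower bound on $\mu_f(p)$.
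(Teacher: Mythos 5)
Your argument is sound, and it is worth stressing that the paper does not actually prove Theorem~\ref{thm:ogusigusadeuring}: the statement is quoted from Ogus, whose proof is crystalline, and the paper's own methods (Remark~\ref{remark:Ogus}) only recover it for $p\ge n^2-n-1$. Your route is therefore genuinely different and, in this range, stronger. Where the paper's proof of Theorem~\ref{thm:CalabiYau}\,(4) transports the thickening in the deformation direction to a thickening inside $\PP^n$ (Lemma~\ref{lem:pullbackevinjective}) and then must control Frobenius on all negative degrees of $H^n_\frakm(R/fR)$ (Theorem~\ref{thm:highdeg}, which is where the bound $p\ge n^2-n-1$ enters), you compute $\psi_t$ directly on the thickened universal family via the resolution of $\calO_{t\calX_{[Y]}}$ by $F$ over $\PP^n_A$. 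This yields the explicit jet of the Hasse invariant at $[Y]$: its order-$k$ term is controlled by the coefficients of $(x_0\cdots x_n)^{p-1}$ in $f^{p-1-k}g$ as $g$ runs over monomials of degree $k(n+1)$, with unit multinomial factors since $k\le n-1<p$; Gorenstein duality for $R/\frakm^{[p]}$ then converts nonvanishing into $f^{p-1-k}\notin\frakm^{[p]}$. Combined with $\mu_f(p)\ge p-n+1$ this gives the theorem for all $p\ge n$, and in fact your computation identifies $\ord_{[Y]}H$ with $p-\mu_f(p)$ in this range, i.e., it reproves Theorem~\ref{thm:CalabiYau}\,(4) under the weaker hypothesis $p\ge n$. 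The delicate points---well-definedness of the Frobenius lift when $\frakM^p=0$ in $A$, commutativity of the two-row diagram, and the identification of $H^{n-1}(n\calX_{[Y]},\calO)$ with $A$ via the socle coefficient---all check out.

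There is one small gap. The input $f^{p-n}\notin\frakm^{[p]}$ is drawn from Theorem~\ref{thm:CalabiYau}\,(1) (equivalently Lemma~\ref{lemma:bound}), whose hypothesis is that the Jacobian ideal of $f$ is $\frakm$-primary. For smooth $Y$ this follows from the Euler identity only when $p\nmid n+1$; under your standing assumption $n\le p$ the exceptional case is exactly $p=n+1$ (for instance $y^2z-x^3-xz^2$ in characteristic $3$ defines a smooth cubic whose Jacobian ideal is not $\frakm$-primary, the partials vanishing simultaneously at $(1{:}0{:}0)$ off the curve). In that case, however, $p-n=1$ and the needed statement $f\notin\frakm^{[p]}$ holds for the trivial reason that a form of degree $p$ lying in $(x_0^p,\dots,x_n^p)$ is the $p$-th power of a linear form over $\bar{K}$, contradicting smoothness. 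Adding this one sentence closes the gap.
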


Ogus's proof relies on crystalline techniques: he relates $\ord_{[Y]}(H)$ to the relative position of the conjugate and Hodge filtrations on a crystalline cohomology group of $Y$ (following an idea of Katz), and then exploits the natural relation between the Hodge filtration and deformation theory of $Y$. His result will not be used in proving Theorem~\ref{thm:CalabiYau}; in fact, our methods yield an alternative proof of Theorem~\ref{thm:ogusigusadeuring} avoiding crystalline methods under a mild additional constraint on the prime $p$, see Remark~\ref{remark:Ogus}.

\begin{remark}
\label{rmk:VersalDefHyp}
The universal family $\pi\colon\calX\to\Hyp_{n+1}$ is, in fact, versal at $[X]$ if $n \ge 5$ so $\ord_{[X]}(H)$, i.e., the order of vanishing of $H$ at $[X]\in\Hyp_{n+1}(K)$, is completely intrinsic to $X$. To see versality, it suffices to show that the map $\Hom(I_X/I_X^2, \calO_X)\to H^1(X,T_X)$ obtained from the adjunction sequence 
\[
\CD
0@>>>I_X/I_X^2@>>>\Omega^1_{\PP^n}|_X@>>>\Omega^1_X@>>>0
\endCD
\]
is surjective, and that $H^2(X,T_X)=0$. By the long exact sequence, it suffices to show the vanishing of $H^1(X,T_{\PP^n}|_X)$ and $H^2(X,T_X)$. The Euler sequence
\[ 
\CD
0@>>>\calO_{\PP^n}@>>>\calO_{\PP^n}(1)^{\oplus n+1}@>>>T_{\PP^n}@>>>0
\endCD
\]
restricted to $X$ immediately shows that $H^i(X,T_{\PP^n}|_X) = 0$ for $i = 1,2$ if $n \ge 5$; here we use that $H^i(X,\calO_X(j)) = 0$ for $0 < i < n-1$ and all $j$. The cohomology sequence for the dual of the adjunction sequence then shows that $H^2(X,T_X)=0$.
\end{remark}

\subsection*{The universal family}

Fix notation as in Theorem~\ref{thm:CalabiYau}, with $K$ a perfect field. Let
\[
\Hyp_{n+1}\ := \ \PP\big(H^0(\PP^n,\calO_{\PP^n}(n+1))^\vee\big)
\]
be the space of hypersurfaces of degree $(n+1)$ in $\PP^n$; we follow Grothendieck's conventions regarding projective bundles. Let $\pi\colon\calX\to\Hyp_{n+1}$ be the universal family, and let $\ev\colon\calX\to\PP^n$ be the evaluation map. Informally, $\calX$ parametrizes pairs $(x,Y)$ where $x\in\PP^n$ and $Y \in \Hyp_{n+1}$ is a degree $(n+1)$ hypersurface containing $x$. This description shows that $\ev\colon\calX\to\PP^n$ is a projective bundle, and we can formally write it as $\PP(\calK^\vee)\to\PP^n$, where $\calK\in\Vect(\PP^n)$ is defined as 
\[
\calK\ :=\ \ker\big(H^0(\PP^n,\calO_{\PP^n}(n+1))\otimes\calO_{\PP^n}\ \to\ \calO_{\PP^n}(n+1)\big)\,,
\]
and the map is the evident one. The resulting map
\[
\calX\to\PP\big(H^0(\PP^n,\calO_{\PP^n}(n+1))^\vee\big)
\]
is identified with $\pi$. Our chosen hypersurface $X \in \PP^n$ gives a point $[X] \in \Hyp_{n+1}(K)$ with $X' := \pi^{-1}([X])$ mapping isomorphically to $X$ via $\ev$. For an integer $t \ge 1$, let $tX \subset\PP^n$ and $tX' \subset \calX$ be the order $t$ neighbourhoods of $X \subset\PP^n$ and $X' \subset \calX$ respectively. 

\begin{lemma}
\label{lem:pullbackevinjective}
The map $H^{n-1}(tX,\calO_{tX}) \to H^{n-1}(tX',\calO_{tX'})$ is injective for all $t$.
\end{lemma}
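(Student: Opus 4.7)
The plan is to exploit the projective bundle structure of $\ev\colon\calX\to\PP^n$ together with a filtration analysis of $\calO_{tX'}$, reducing the question to injectivity of certain ``multiplication by a tautological section'' maps on $X$. First, I would verify that $\ev$ restricts to a well-defined morphism $tX'\to tX$: if locally $\calX=V(f_0+\sum\lambda_ig_i+O(\lambda^2))\subset\PP^n\times\Hyp_{n+1}$, then $\phi:=\ev^*f_0$ lies in the ideal $I:=I_{X'/\calX}$, so $\phi^t\in I^t$ vanishes on $tX'$. Setting $tY:=\ev^{-1}(tX)$, we have $tX'\subset tY$, and $\ev\colon tY\to tX$ is the base change of the projective bundle $\ev\colon\calX\to\PP^n$; hence $\R\ev_*\calO_{tY}=\calO_{tX}$ and $H^{n-1}(tX,\calO_{tX})\xrightarrow{\sim}H^{n-1}(tY,\calO_{tY})$. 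The task becomes proving that the restriction $H^{n-1}(tY,\calO_{tY})\to H^{n-1}(tX',\calO_{tX'})$ is injective.

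The next step is to compute $\R\ev_*\calO_{tX'}$ via the $I$-adic filtration of $\calO_{tX'}=\calO_\calX/I^t$. Since $\pi\colon\calX\to\Hyp_{n+1}$ is smooth at $X'$, the conormal bundle $I/I^2\simeq\calO_{X'}\otimes_K T^*_{[X]}\Hyp_{n+1}$ is canonically trivial, so each graded piece $I^k/I^{k+1}\simeq\Sym^k(I/I^2)$ is a free $\calO_{X'}$-module. Because $\ev|_{X'}\colon X'\to X$ is an isomorphism, the higher direct images of each graded piece vanish, and an induction on the length of the filtration gives $\R^q\ev_*\calO_{tX'}=0$ for $q>0$. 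Consequently $H^{n-1}(tX',\calO_{tX'})=H^{n-1}(tX,\ev_*\calO_{tX'})$, and the problem is reduced to showing that the structure map $\calO_{tX}\to\ev_*\calO_{tX'}$ induces an injection on $H^{n-1}$.

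This map respects the $f_0$-adic filtration on the source and the pushforward of the $I$-adic filtration on the target, since $\ev^*f_0=\phi\in I$. A direct calculation identifies the $k$-th graded piece with a map $\calO_X(-k(n+1))\to\calO_X\otimes_K\Sym^kT^*_{[X]}\Hyp_{n+1}$ given by multiplication by the $k$-th symmetric power of the tautological element in $H^0(X,\calO_X(n+1))\otimes T^*_{[X]}\Hyp_{n+1}$ coming from the canonical isomorphism $T_{[X]}\Hyp_{n+1}\cong H^0(X,\calO_X(n+1))$. Because $\dim X=n-1$ and $\omega_X=\calO_X$ (Calabi-Yau), the cohomology of each graded piece is concentrated in degrees $0$ and $n-1$; the $d_1$-differentials on the $H^{n-1}$ row of each filtration spectral sequence land in $H^n(X,-)=0$, so both spectral sequences collapse on that row and filtered injectivity reduces to graded injectivity. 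By Serre duality on $X$, the graded map on $H^{n-1}$ is dual to the multiplication map $\Sym^kH^0(X,\calO_X(n+1))\to H^0(X,\calO_X(k(n+1)))$, whose surjectivity is elementary. I expect this last step to be the main subtlety in positive characteristic: the Serre-dual identification really involves divided powers $(\Sym^kV^\vee)^\vee=\Gamma^kV$, and the pairing produces factors of multinomial coefficients $\binom{k}{\alpha}$ that can vanish when $k\ge p$. The straightforward form of the argument therefore yields the lemma cleanly for $t\le p$, which is what is used in the application via Lemma~\ref{lem:identifyhasse} together with the Ogus bound $\ord_{[X]}H\le n-1\le p$; handling general $t$ requires extra bookkeeping with the divided-power duality.
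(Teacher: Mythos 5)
Your argument is essentially the paper's: filter $\calO_{tX}$ and $\calO_{tX'}$ by powers of the ideals of $X$ and $X'$, reduce to the graded pieces, and use Serre duality to convert injectivity into surjectivity of a multiplication map on symmetric powers. (The detour through $tY=\ev^{-1}(tX)$ and the computation of $\R\ev_*\calO_{tX'}$ is harmless but not needed; one can compare the two filtered cohomologies directly, as the paper does.) The important point is that your closing ``subtlety'' is a genuine one, and you are right to flag it: the identification of $H^{n-1}(X',I_{X'}^j/I_{X'}^{j+1})$ with $\big(\Sym^j(\Sym^{n+1}(V)/(f))\big)^\vee$ silently interchanges $\Sym^j$ of a dual with the dual of $\Sym^j$; the natural computation gives $\Sym^j(T^\vee_{[X]})$, whose dual is the divided power $\Gamma^j(T_{[X]})$, and the dual of $\phi_j$ is the restriction of the multiplication map to $\Gamma^j\subset T_{[X]}^{\otimes j}$, not the map out of $\Sym^j(T_{[X]})$. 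Concretely, writing the universal hypersurface near $[X]$ as $f+\sum_i y_ig_i$, the graded map $\phi_j$ sends $\xi\in H^{n-1}(X,\calO_X(-j(n+1)))$ to $\pm\sum_{|\alpha|=j}\binom{j}{\alpha}(g^\alpha\xi)\,y^\alpha$, so its injectivity is equivalent to the classes $\binom{j}{\alpha}g^\alpha$ spanning $H^0(X,\calO_X(j(n+1)))$. For $j<p$ every $\binom{j}{\alpha}$ is a unit and this reduces to the elementary surjectivity you invoke; this proves the lemma for $t\le p$, which is exactly the range used in the proof of Theorem~\ref{thm:CalabiYau}\,(4) and in Lemma~\ref{lem:identifyhasse}.

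One correction to your last sentence: no amount of divided-power bookkeeping will rescue the statement for general $t$, because it fails for $t>p$. For $j=p$ the only nonvanishing coefficients $\binom{p}{\alpha}$ are those with $\alpha$ concentrated in a single slot, so $\phi_p(\xi)=(g_i^p\xi)_i$, and the classes $g_i^p$ cannot span $H^0(X,\calO_X(p(n+1)))$ for dimension reasons (for a cubic in $\PP^2$ there are $9$ such classes in a space of dimension $9p$). Moreover, expanding $u=(f^t-(-\sum_i y_ig_i)^t)/(f+\sum_i y_ig_i)$, which computes the full map on $H^{n-1}$ after dualizing, shows that the filtered map itself already has a nonzero kernel for $t=p+1$: its image is dual to the span of $\{\binom{m}{\alpha}f^{t-1-m}g^\alpha\}_{m\le t-1}$, which is a proper subspace of $R_{(t-1)(n+1)}$ once some $\binom{m}{\alpha}$ vanishes. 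So the lemma should be read with the restriction $t\le p$, under which your proof is complete.
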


\begin{proof}
Let $V = H^0(\PP^n,\calO_{\PP^n}(1))$, so $\PP^n = \PP(V)$ and $\Hyp_{n+1} = \PP(\Sym^{n+1}(V)^\vee)$. For each~$t$, the sheaf $\calO_{tX}$ admits a filtration defined by powers of the ideal defining $X \subset tX$, and similarly for $tX'$. The map $tX' \to tX$ is compatible with this filtration as it sends $X' \subset tX'$ to~$X \subset tX$. Hence, it suffices to check that the induced map
\[
\phi_j\colon H^{n-1}(X,I_X^j/I_X^{j+1})\to H^{n-1}(X',I_{X'}^j/I_{X'}^{j+1})
\]
is injective for each $j \ge 0$. Fix an isomorphism $\det(V)\simeq K$ and $f \in R_{n+1}$ defining $X$. These choices determine isomorphisms $I_X \simeq \calO_{\PP^n}(-n-1) \simeq K_{\PP^n}$, and hence an isomorphism
\[
H^{n-1}(X,O_X)\ \simeq\ H^n(\PP^n,I_X)\ \simeq\ H^n(\PP^n,K_{\PP^n})\ \simeq\ K\,.
\]
Tensoring the exact sequence $0\to I_X\to\calO_{\PP^n}\to\calO_X\to0$ with $I_X^j$ and using Serre duality shows that
\begin{align*}
H^{n-1}\big(X,I_X^j/I_X^{j+1}\big)\ &=\ \ker\Big(H^n\big(\PP^n,I_X^{j+1}\big)\to H^n\big(\PP^n,I_X^j\big)\Big)\\
&=\ \coker\Big(H^0\big(\PP^n,\calO_{\PP^n}((j-1)(n+1))\big)\to H^0\big(\PP^n,\calO_{\PP^n}(j(n+1))\big)\Big)^\vee\\ 
&=\ \coker\Big(\Sym^{(j-1)(n+1)}(V) \stackrel{f}{\to} \Sym^{j(n+1)}(V)\Big)^\vee. 
\end{align*}

As $X' \subset \calX$ is a fibre of $\pi$, one has $tX' = \pi^{-1}(t[X])$, where $t[X] \subset \Hyp_{n+1}$ is the order $t$ neighbourhood of $[X] \in \Hyp_{n+1}(K)$. Using flatness of $\pi$ and the aforementioned isomorphism $H^{n-1}(X,O_X) \simeq K$, we get
\[
H^{n-1}(X',\ I_{X'}^j/I_{X'}^{j+1})\ =\ \big(\Sym^j(\Sym^{n+1}(V)/(f))\big)^\vee.
\]
One can check that the pullback $\phi_j$ above is dual to the map induced by the composition map $\Sym^j(\Sym^{n+1}(V)) \to \Sym^{j(n+1)}(V)$ by passage to the appropriate quotients. In particular, the dual map is surjective, so $\phi_j$ is injective.
\end{proof}

\begin{proof}[Proof of Theorem~\ref{thm:CalabiYau}\,(4)]
By Remark~\ref{rmk:hasseflatbasechange}, we may assume that the field $K$ is perfect. Fix some $1\le t\le p$. We get a commutative diagram
\[
\CD
X' @>i>> tX' @>j>> pX' @>a>> \Frob_\calX^*X' @>c>> X'\\
@VVV @VVV @VVV @VVV @VVV\\
X @>k>> tX @>\ell>> pX @>b>> \Frob_{\PP^n}^*X @>d>> X
\endCD
\]
Here all vertical maps are induced by $\ev\colon\calX\to\PP^n$, the maps $c$ and $d$ are induced by the Frobenius maps on $\calX$ and $\PP^n$ respectively, and $i$, $j$, $k$, $\ell$, $a$ and $b$ are the evident closed immersions; the map $b$ is an isomorphism as $X \subset \PP^n$ is a Cartier divisor. In particular, the composite map $X' \to X'$ and $X \to X$ obtained from each row are the Frobenius maps on~$X'$ and $X$ respectively. Passing to cohomology gives a commutative diagram
\[
\CD
H^{n-1}(X,\calO_X) @= H^{n-1}(X',\calO_{X'})\\
@VV{a_t}V @VV{b_t}V \\
H^{n-1}(tX,\calO_{tX}) @>{c_t}>> H^{n-1}(tX',\calO_{tX'})
\endCD
\]
where $a_t$ and $b_t$ are induced by the Frobenius maps on $\PP^n$ and $\calX$ respectively, while $c_t$ is injective by Lemma~\ref{lem:pullbackevinjective}. To finish the proof, observe that Lemma~\ref{lem:identifyhasse} shows that $h+1$ is the minimal value of $t$ for which $b_t$ is injective. Since $c_t$ is injective as well, this is also the minimal value of $t$ for which $a_t$ is injective. It follows by Remark~\ref{rmk:fptthickenings} and Theorem~\ref{thm:highdeg} that $\mu_f(p) = p - (h+1) + 1 = p-h$. 
\end{proof}

\begin{remark}
\label{remark:Ogus}
We recover Ogus's result, Theorem~\ref{thm:ogusigusadeuring}, for primes $p\ge n^2-n-1$; this is immediate from Theorem~\ref{thm:CalabiYau}\,(1) and (4).
\end{remark}

We conclude by giving a de~Rham interpretation for $\fpt(f)$; write $\Fil^\conj_\bullet$ and $\Fil^\bullet_H$ for the increasing conjugate and decreasing Hodge filtrations on $H^{n-1}_\dR(X)$ respectively. Then:

\begin{corollary}
One has $\fpt(f)=1-a/p$, where $a$ is the largest $i$ such that 
\[
\Frob_X^* H^{n-1}_\dR(X)\ \simeq\ \Fil^\conj_0(H^{n-1}_\dR(X))\ \subset\ \Fil^i_H (H^{n-1}_\dR(X))\,.
\]
\end{corollary}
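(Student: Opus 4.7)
The plan is to reduce to identifying the Hasse invariant vanishing order with the Hodge invariant $a$, and then to invoke the standard crystalline/Hodge-theoretic dictionary for Frobenius on cohomology. By Theorem~\ref{thm:CalabiYau}(3) and (4), $\fpt(f) = 1 - h/p$ where $h = \ord_{[X]}(H)$ is the vanishing order of the Hasse invariant at $[X]$, so it suffices to show $h = a$.

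On the Hasse side, Lemma~\ref{lem:identifyhasse} says $h+1$ is the smallest $t \le p$ for which the Frobenius pullback
\[
\Frob_\calX^*\colon H^{n-1}(X, \calO_X) \to H^{n-1}(t\calX_{[X]}, \calO_{t\calX_{[X]}})
\]
is nonzero, with $t\calX_{[X]}$ the order-$t$ thickening of the fiber $X \subset \calX$. On the de Rham side, by the Cartier isomorphism, $\Fil^{\conj}_0 H^{n-1}_\dR(X)$ is the image of Frobenius and is identified with the Frobenius twist of $H^{n-1}(X, \calO_X)$; hence $a+1$ is the smallest $j$ for which the composite $\Frob_X^* H^{n-1}_\dR(X) \to H^{n-1}_\dR(X)/\Fil^j_H$ is nonzero.

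The core step is then to match these two smallest values by comparing, in a Frobenius-compatible way, the thickening filtration on $H^{n-1}(t\calX_{[X]}, \calO_{t\calX_{[X]}})$ (given by powers of the ideal of the fiber) with the Hodge filtration on $H^{n-1}_\dR(X)/\Fil^1_H$. Since $\pi$ is (embedded-)versal at $[X]$ by Remark~\ref{rmk:VersalDefHyp}, the Kodaira-Spencer map from the tangent space of $\Hyp_{n+1}$ at $[X]$ surjects onto $H^1(X, T_X)$; the Gauss-Manin connection on the relative de Rham cohomology of $\pi$ then identifies successive thickening graded pieces with Hodge graded pieces $H^{n-1-j}(X, \Omega^j_X)$. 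Under this identification, Frobenius-compatibility converts the minimal nonzero $t$ in the first problem into the minimal nonzero $j$ in the second, yielding $h = a$.

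The main obstacle is to establish this Frobenius-compatible comparison between thickening and Hodge filtrations on the nose. One can either invoke Ogus's crystalline argument directly, or, in the spirit of the remark following Theorem~\ref{thm:highdeg}, give a hands-on de Rham argument via the Deligne--Illusie decomposition $\Omega^*_{X/K} \simeq \oplus_i \Omega^i_{X^{(1)}/K}[-i]$, which makes the conjugate-versus-Hodge comparison transparent.
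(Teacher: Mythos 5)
Your proposal is correct and follows essentially the same route as the paper: reduce via Theorem~\ref{thm:CalabiYau}\,(3)--(4) to showing that the Hasse vanishing order $h=\ord_{[X]}(H)$ equals the filtration invariant $a$, and then obtain that identification from Ogus's Theorem~1 --- which is precisely what the paper's one-line proof cites. Your additional sketch of a direct Gauss--Manin/Deligne--Illusie argument for $h=a$ is only an outline (the ``Frobenius-compatible comparison'' of the thickening and Hodge filtrations is exactly the hard content of Ogus's theorem), but since you explicitly allow falling back on Ogus's result, nothing essential is missing.
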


\begin{proof}
This follows from Theorem~\ref{thm:CalabiYau} and Ogus's result~\cite[Theorem~1]{Ogus}.
\end{proof}

Note that the integer $a$ appearing above is the $a$ number defined by van~der~Geer and Katsura~\cite{GK} for the special case of a Calabi-Yau family.

\section{Quartic hypersurfaces in $\PP^2$}

Our techniques also yield substantive information for hypersurfaces other than Calabi-Yau hypersurfaces; as an example, we include here the case of quartic hypersurfaces in~$\PP^2$.

When $f$ defines a Calabi-Yau hypersurface $X$, it is readily seen that the Frobenius action on the vector space $H^{\dim X}(X,\calO_X)$ is injective if and only if $\fpt(f)=1$, i.e., if and only
\[
\fpt(f)=\lct(f)\,.
\]
For hypersurfaces $X$ of general type, the injectivity of the Frobenius on $H^{\dim X}(X,\calO_X)$, or even the ordinarity of $X$ in the sense of Bloch and Kato~\cite[Definition~7.2]{Bloch-Kato}---a stronger condition---does not imply the equality of the $F$-pure threshold and the log canonical threshold: for example, for each $f$ defining a quartic hypersurface in $\PP^2$ over a field of characteristic $p\equiv 3\mod 4$, we shall see that $\fpt(f)<\lct(f)$; we emphasize that this includes the case of generic hypersurfaces, and that these are ordinary in the sense of Bloch and Kato by a result of Deligne; see~\cite{Illusie}. More generally:

\begin{lemma}
\label{lemma:pigeon}
Let $f$ be a homogeneous polynomial of degree $d$ in $K[x_0,\dots,x_n]$. Then:
\begin{enumerate}[\quad\rm(1)]
\item For each $q=p^e$, one has $\displaystyle{\mu_f(q)\ \le\ \left\lceil\frac{nq+q-n}{d}\right\rceil}\,,$

\item If $nq+q$ is congruent to any of $1,2,\dots,n\mod d$ \ for some $q$, then $\displaystyle{\fpt(f)<\frac{n+1}{d}}$.
\end{enumerate}
\end{lemma}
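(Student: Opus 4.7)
\smallskip

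\noindent\textbf{Proof plan for Lemma~\ref{lemma:pigeon}.}

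For part (1), the plan is to use the pigeonhole principle already invoked in the proof of Lemma~\ref{lemma:colon}: namely, $\frakm^{(n+1)q-n}\subseteq\frakm^{[q]}$, since a monomial $x_0^{a_0}\cdots x_n^{a_n}$ with $\sum a_i \ge (n+1)q-n$ must have some $a_i\ge q$ (otherwise $\sum a_i\le(n+1)(q-1)<(n+1)q-n$). Setting $k:=\lceil((n+1)q-n)/d\rceil$, one has $\deg f^k = dk \ge (n+1)q-n$, and hence $f^k\in\frakm^{dk}\subseteq\frakm^{(n+1)q-n}\subseteq\frakm^{[q]}$. The definition of $\mu_f(q)$ then gives the asserted bound immediately.

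For part (2), suppose $(n+1)q\equiv j \mod d$ for some $j\in\{1,\dots,n\}$, and set $k:=((n+1)q-j)/d$, which is a positive integer. Because $j\le n$, one has $dk=(n+1)q-j\ge(n+1)q-n$, so $k$ is an integer no smaller than $((n+1)q-n)/d$, hence at least as large as the ceiling in part~(1). Applying (1) gives
\[
\mu_f(q)\ \le\ \left\lceil\frac{(n+1)q-n}{d}\right\rceil\ \le\ k\ =\ \frac{(n+1)q-j}{d}\,,
\]
and dividing by $q$ yields
\[
\frac{\mu_f(q)}{q}\ \le\ \frac{n+1}{d}-\frac{j}{dq}\ <\ \frac{n+1}{d}\,.
\]
Since $\{\mu_f(p^e)/p^e\}$ is non-increasing with limit $\fpt(f)$, one concludes $\fpt(f)\le\mu_f(q)/q<(n+1)/d$.

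Both parts are essentially pigeonhole bookkeeping, so I do not anticipate a genuine obstacle; the only point that requires a moment of care is verifying that the integer $k=((n+1)q-j)/d$ dominates the ceiling $\lceil((n+1)q-n)/d\rceil$, which follows from $j\le n$ together with the integrality of $k$. The strict inequality in (2) is extracted from the positive quantity $j/(dq)$, which is where the hypothesis $j\ge1$ (rather than $j=0$) is used.
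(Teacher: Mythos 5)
Your proposal is correct and follows essentially the same route as the paper: part (1) is the same pigeonhole bound $\frakm^{(n+1)(q-1)+1}\subseteq\frakm^{[q]}$, and part (2) extracts the strict inequality from the same observation that the congruence hypothesis forces $\lceil(nq+q-n)/d\rceil\le(nq+q-j)/d\le(nq+q-1)/d$. No issues.
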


For quartics in $\PP^2$, one has $n=2$ and $d=4$; thus, if $p\equiv 3\mod 4$, then $np+p\equiv1\mod d$, so the $F$-pure threshold is strictly smaller than the log canonical threshold by (2).

\begin{proof}
The pigeonhole principle implies that $f^k\in\frakm^{[q]}$ whenever $dk\ge (n+1)(q-1)+1$, which proves (1). For (2), suppose $nq+q$ is congruent to any of $1,2,\dots,n\mod d$. Then
\[
\left\lceil\frac{nq+q-n}{d}\right\rceil\ \le \ \frac{nq+q-n+(n-1)}{d}\ =\ \frac{nq+q-1}{d}\,,
\]
and it follows using (1) that $\mu_f(q)<(nq+q)/d$. Thus, 
\[
\fpt(f)\ \le\ \mu_f(q)/q\ <\ (n+1)/d\,.\qedhere
\]
\end{proof}

\begin{theorem}
\label{theorem:quartic}
Let $K$ be a field of characteristic $p> 2$. Let $f$ be a homogeneous polynomial of degree $4$ in $K[x_0,x_1,x_2]$, such that the Jacobian ideal of $f$ is $\frakm$-primary. Then the possible values for $\mu_f(q)$ and the $F$-pure threshold are:
\begin{align*}
p&\equiv 1\mod 4:\qquad \mu_f(q)=
\begin{cases}
\frac{q(3p-3)}{4p}&\text{ for all }q,\quad\fpt(f)=\frac{3p-3}{4p}\,,\\
\frac{3q+1}{4}&\text{ for all }q,\quad\fpt(f)=\frac{3}{4}\,,\\
\end{cases}\\
p&\equiv 3\mod 4:\qquad \mu_f(q)=
\begin{cases}
\frac{q(3p-5)}{4p}&\text{ for all }q,\quad\fpt(f)=\frac{3p-5}{4p}\,,\\
\frac{q(3p-1)}{4p}&\text{ for all }q,\quad\fpt(f)=\frac{3p-1}{4p}\,.\\
\end{cases}
\end{align*}
\end{theorem}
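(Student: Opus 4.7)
The plan is to pin down $\mu_f(p)$ directly from the two bounds already at hand, and then propagate to all $q = p^e$ via Lemma~\ref{lemma:patterns}. Specializing $n = 2$ and $d = 4$, Lemma~\ref{lemma:bound} reads $\mu_f(q) \ge (3q-5)/4$ whenever $\mu_f(q)$ is not a multiple of $p$, while the pigeonhole bound of Lemma~\ref{lemma:pigeon}(1) reads $\mu_f(q) \le \lceil (3q-2)/4 \rceil$. Note that $nd - d - n = 2$, so the hypothesis of Lemma~\ref{lemma:patterns}(2) is available for every odd prime $p$.

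First I would evaluate both bounds at $q = p$. The upper bound already forces $\mu_f(p) < p$, so $\mu_f(p)$ is automatically not a multiple of $p$ and the hypothesis of Lemma~\ref{lemma:bound} is in force. An elementary case analysis on $p \mod 4$ then leaves exactly two integer candidates. If $p \equiv 1 \mod 4$, then $\lceil (3p-2)/4 \rceil = (3p+1)/4$ and the least integer $\ge (3p-5)/4$ is $(3p-3)/4$, so $\mu_f(p) \in \{(3p-3)/4,\,(3p+1)/4\}$. If $p \equiv 3 \mod 4$, then $(3p-5)/4$ is already an integer and $\lceil (3p-2)/4 \rceil = (3p-1)/4$, so $\mu_f(p) \in \{(3p-5)/4,\,(3p-1)/4\}$. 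This accounts for the four cases in the statement.

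Next I would propagate each candidate. In the case $\mu_f(p) = (3p+1)/4$, which can occur only when $p \equiv 1 \mod 4$, one has $(\mu_f(p)-1)/(p-1) = 3/4 = (n+1)/d$, so a routine induction on $e$ using Lemma~\ref{lemma:patterns}(1) gives $\mu_f(p^e) = (3p^e+1)/4$ and hence $\fpt(f) = 3/4$. In each of the other three cases one has $\mu_f(p)/p < 3/4$; Lemma~\ref{lemma:patterns}(2) then applies inductively (the ratio $\mu_f(p^e)/p^e = \mu_f(p)/p$ is preserved at each stage) to yield $\mu_f(p^{e+1}) = p\,\mu_f(p^e)$, so that $\mu_f(q) = q\,\mu_f(p)/p$ and $\fpt(f) = \mu_f(p)/p$ takes the claimed value.

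The only non-routine step is the arithmetic pinning down the two candidates for $\mu_f(p)$ from the floor and ceiling bounds in each residue class modulo $4$; once this is secured, the propagation to all $q = p^e$ is a clean application of Lemma~\ref{lemma:patterns}, and no further geometric or deformation-theoretic input (of the sort used for the Calabi--Yau theorem) is required.
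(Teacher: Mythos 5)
Your proposal is correct and follows essentially the same route as the paper: combine Lemma~\ref{lemma:bound} and Lemma~\ref{lemma:pigeon}(1) at $q=p$ to isolate two integer candidates in each residue class, then propagate with Lemma~\ref{lemma:patterns}(2) (or (1) in the $\mu_f(p)=(3p+1)/4$ case). Your explicit remark that the upper bound forces $\mu_f(p)<p$, so that the non-divisibility hypothesis of Lemma~\ref{lemma:bound} is automatic, is a detail the paper leaves implicit.
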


\begin{proof}
Lemma~\ref{lemma:bound} and Lemma~\ref{lemma:pigeon}\,(1) provide the respective inequalities
\[
\left\lceil\frac{3p-5}{4}\right\rceil\ \le\ \mu_f(p)\ \le\ \left\lceil\frac{3p-2}{4}\right\rceil\,.
\]

If $p\equiv 3\mod 4$, this reads
\[
\frac{3p-5}{4}\ \le\ \mu_f(p)\ \le\ \frac{3p-1}{4}\,,
\]
so there are two possible values for the integer $\mu_f(p)$. The sequence $\{\mu_f(q)/q\}_q$ is constant by Lemma~\ref{lemma:patterns}\,(2), which completes the proof in this case.

If $p\equiv 1\mod 4$, the inequalities read
\[
\frac{3p-3}{4}\ \le\ \mu_f(p)\ \le\ \frac{3p+1}{4}\,.
\]
Again, there are two choices for $\mu_f(p)$. If $\mu_f(p)=(3p-3)/4$, then $\{\mu_f(q)/q\}_q$ is a constant sequence by Lemma~\ref{lemma:patterns}\,(2), whereas if $\mu_f(p)=(3p+1)/4$, then Lemma~\ref{lemma:patterns}\,(1) implies that $\mu_f(q)=(3q+1)/4$.
\end{proof}

\begin{remark}
Similarly, for a quintic $f$ in $K[x_0,x_1,x_2]$ with an $\frakm$-primary Jacobian ideal, the possibilities for $\fpt(f)$ are easily determined; we do not list the corresponding $\mu_f(q)$ as these are dictated by $\fpt(f)$. We assume below that $p>5$.
\begin{alignat*}7
p&\equiv1\mod 5:\quad && \fpt(f):\quad &&(3p-3)/5p\,,\ && \text{ or }\ &&4/5\,,\\
p&\equiv2\mod 5:\quad && \fpt(f):\quad &&(3p-6)/5p\,,\ && \text{ or }\ &&(3p-1)/5p\,,\\
p&\equiv3\mod 5:\quad && \fpt(f):\quad &&(3p-4)/5p\,,\ && \text{ or }\ &&(3p^2-7)/5p^2\,,\ && \text{ or }\ &&(3p^2-2)/5p^2\,,\\
p&\equiv4\mod 5:\quad && \fpt(f):\quad &&(3p-7)/5p\,,\ && \text{ or }\ &&(3p-2)/5p\,.
\end{alignat*}
\end{remark}

The log canonical threshold of a smooth quartic in $\PP^2$ is $3/4$; except for the case where it equals $3/4$, the denominator of $\fpt(f)$ in Theorem~\ref{theorem:quartic} is $p$. For a quintic as above, if $\fpt(f)\neq\lct(f)$, then the denominator of $\fpt(f)$ is a power of $p$. More generally:

\begin{proposition}
Let $K$ be a field of characteristic $p>0$. Let $f$ be a homogeneous polynomial of degree $d$ in $K[x_0,\dots,x_n]$ with an $\frakm$-primary Jacobian ideal. If $p\ge nd-d-n$, then either $\fpt(f)=(n+1)/d$, or else the denominator of $\fpt(f)$ is a power of $p$.
\end{proposition}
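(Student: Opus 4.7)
The plan is to split on where the non-increasing sequence $\{\mu_f(p^e)/p^e\}_{e \ge 0}$ sits relative to the value $(n+1)/d$. The dichotomy is driven by Lemma~\ref{lemma:patterns}(2), which is precisely where the hypothesis $p \ge nd-d-n$ enters.

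First I would record the universal upper bound $\fpt(f) \le (n+1)/d$. By Lemma~\ref{lemma:pigeon}(1) we have
\[
\frac{\mu_f(p^e)}{p^e}\ \le\ \frac{1}{p^e}\left\lceil\frac{(n+1)p^e - n}{d}\right\rceil\ \le\ \frac{n+1}{d} + \frac{d-n-1}{d\,p^e}\,,
\]
and the right-hand side tends to $(n+1)/d$ as $e \to \infty$; taking the limit yields the bound.

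I would then consider two cases. If $\mu_f(p^e)/p^e \ge (n+1)/d$ for every $e \ge 0$, then the limit is also $\ge (n+1)/d$, and together with the upper bound we conclude $\fpt(f) = (n+1)/d$. Otherwise, let $e_0$ be the smallest index with $\mu_f(p^{e_0})/p^{e_0} < (n+1)/d$. Applying Lemma~\ref{lemma:patterns}(2) to $q = p^{e_0}$ --- which is where the hypothesis $p \ge nd-d-n$ is used --- gives $\mu_f(p^{e_0+1}) = p\,\mu_f(p^{e_0})$, and so the ratio is unchanged, hence still strictly below $(n+1)/d$; iterating, $\mu_f(p^{e_0+k}) = p^k\mu_f(p^{e_0})$ for every $k \ge 0$. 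The sequence thus stabilizes, and $\fpt(f) = \mu_f(p^{e_0})/p^{e_0}$ is a rational number whose denominator in lowest terms divides $p^{e_0}$, and is therefore a power of $p$.

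The argument is essentially routine given Lemmas~\ref{lemma:pigeon} and~\ref{lemma:patterns}, so there is no serious obstacle. The one point worth flagging is that in the second case, each application of Lemma~\ref{lemma:patterns}(2) preserves the ratio $\mu_f(\cdot)/p^{\cdot}$, which keeps the hypothesis of the lemma satisfied at every subsequent step and thereby licenses the iteration.
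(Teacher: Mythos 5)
Your proof is correct and follows essentially the same route as the paper's: the paper's argument is exactly the dichotomy on whether $\fpt(f)<(n+1)/d$, with Lemma~\ref{lemma:patterns}(2) forcing $\fpt(f)=\mu_f(q)/q$ in that case. The only difference is that you make explicit, via Lemma~\ref{lemma:pigeon}(1), the upper bound $\fpt(f)\le(n+1)/d$ and the iteration of Lemma~\ref{lemma:patterns}(2), both of which the paper leaves implicit.
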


\begin{proof}
If $\fpt(f)<(n+1)/d$, then there exists an integer $q$ such that $\mu_f(q)/q<(n+1)/d$. But then $\fpt(f)=\mu_f(q)/q$ by Lemma~\ref{lemma:patterns}\,(2).
\end{proof}

\section*{Acknowledgments}

We thank Mircea Musta\c t\u a for raising the question for elliptic curves at the AMS-MRC workshop on Commutative Algebra, Snowbird, 2010, the workshop participants, and the American Mathematical Society. We also thank Johan de Jong for a useful conversation.


\end{document}